\newcommand\reallywidehat[1]{\arraycolsep=0pt\relax%
\begin{array}{c}
\stretchto{
  \scaleto{
    \scalerel*[\widthof{\ensuremath{#1}}]{\kern-.5pt\bigwedge\kern-.5pt}
    {\rule[-\textheight/2]{0.1ex}{\textheight}} 
  }{\textheight} %
}{0.5ex}\\           
#1\\                 
\rule{-1ex}{0ex}
\end{array}
}
\numberwithin{equation}{section}
\theoremstyle{plain}
\newtheorem{theorem}{Theorem}[section]
\newtheorem{lemma}[theorem]{Lemma}
\newtheorem{corollary}[theorem]{Corollary}
\theoremstyle{definition}
\newtheorem{definition}[theorem]{Definition}
\newtheorem{remark}[theorem]{Remark}
 \DeclareMathOperator{\Sk}{Sk}
 \DeclareMathOperator{\D}{D}
 \DeclareMathOperator{\diag}{diag}
 \DeclareMathOperator{\im}{im}
 \DeclareMathOperator{\C}{C} 
 \DeclareMathOperator{\Mod}{Mod}
\newcommand{\mbf}{\mathbf}
\newcommand{\Sg}[1]{\text{Sg}^{\mathbf{#1}}}
\newcommand{\longra}{\longrightarrow}
\newcommand{\lra}{\longrightarrow}
\newcommand{\ra}{\rightarrow}
\newcommand{\ol}{\overline}
\newcommand{\w}{\wedge}
\newcommand{\ora}{\overrightarrow}
\newcommand{\wh}{\widehat}
\newcommand{\wt}{\widetilde}
\newcommand{\bsup}{\dot{\vee}}
\newcommand{\Bsup}{\dot{\bigvee}}
\newcommand{\Q}{\mathbb{Q}}
\newcommand{\Z}{\mathbb{Z}}
\newcommand{\N}{\mathbb{N}}
\begin{document}

\author{Jo\"el
Adler\thanks{joel.adler@phbern.ch}
\\P\"adagogische Hochschule Bern, Switzerland}
\title{``The model companion of the class of
pseudocomplemented semilattices is
finitely axiomatizable'' revised}
\date{\today}
\maketitle \setlength{\parindent}{0pt} \setcounter{page}{1} 

\abstract{
It is shown that the class
$\mathcal{PCSL}^{ec}$ of existentially closed pseudocomplemented
semilattices is finitely axiomatizable by appropriately extending
a finite axiomatization of the class
$\mathcal{PCSL}^{ac}$ of algebraically closed
pseudocomplemented semilattices. Because
$\mathcal{PCSL}^{ec}$ coincides with the model companion of the class
$\mathcal{PCSL}$ of pseudocomplemented semilattices this ans\-wers the question asked by Albert and Burris in a paper in 1986: ``Does the class of
pseudocomplemented semilattices have a finitely axiomatizable model
companion?''
}

\maketitle


\setcounter{section}{-1}

\section{Changes concerning the author's paper Algebra Universalis (2014),\\ (DOI) 10.1007/s00012-014-0297-9
containing Lemma 5.3, which does not hold.}
%
%
The main theorem of the published version \cite[Theorem 5.9]{Ad} is split into a necessity part ---Theorem \ref{theorem_necessity}--- and a sufficiency part ---Theorem \ref{theorem_sufficiency}. The intermediate result in the proof of the sufficiency in the main theorem of the published version is now the independent Lemma \ref{lemma_criterion_ec}. This lemma is put at the beginning of Section \ref{sec_finite_ax_ec} because it determines the section.

The proof of Lemma \ref{lemma_criterion_ec} has to be carried out without using \cite[Theorem 5.3]{Ad} from the published version. Without this lemma the subalgebra $\mathbf{S}$ cannot be assumed to be isomorphic to a subdirectly irreducible p-semilattice $\mathbf{2}$ or $\widehat{\mathbf{F}_t}$, $t\geq 1$. It may still be assumed to be isomorphic to
a direct product of subdirectly irreducible p-semilattices $\mathbf{2}^{t}\times\prod_{i=1}^p\widehat{\mathbf{F}_{f(i)}}$, which is part of the statement of Lemma \ref{lemma_criterion_ec}. In the proof of this lemma the new Lemma \ref{lemma_fin_dir_prod} is used.

We have the following situation

\begin{equation}\label{eq_apprex}
      \mathbf{S}\cong\mathbf{2}^r\times\prod_{i=1}^{s_1}\mathbf{F}_{f_1(i)}\times\prod_{i=1}^{s_2}\widehat{\mathbf{F}_{f_2(i)}}\quad (r,s_1,s_2\in\N)
\end{equation}
%
\begin{equation}\label{eq_v1_vm}
\mathbf{T}\cong\mathbf{2}^{r'}\times\prod_{i=1}^{s'}\widehat{\mathbf{F}_{g(i)}} \quad(r',s', g(i)\in\N),
\end{equation}
with $r\leq r'$, $s_1+s_2\leq s'$, $1\leq f_1(i)\leq g(i)$  ($1\leq i\leq s_1$), $1\leq f_2(i)\leq g(s_1+i)$, ($1\leq i\leq s_2$) and $1\leq g(i)$ ($s_1+s_2+1\leq i\leq s'$) because of $\mathbf{S}\leq\mathbf{T}$

On the semantic side a new lemma, Lemma \ref{lemma_stepwise_ext0}, is necessary. The corresponding new syntactic lemma is Lemma \ref{lemma_spec_ext_case0}. To prove Lemma \ref{lemma_spec_ext_case0} axiom (EC3) has to be strengthened.

The semantic lemmas Lemma  \ref{lemma_stepwise_ext1}  and Lemma  \ref{lemma_stepwise_ext2} as well as the syntactic lemmas Lemma \ref{lemma_spec_ext_case1} (Lemma 5.6 in the published version) and Lemma \ref{lemma_spec_ext_case2} (Lemma 5.7) have to be adapted.
\begin{itemize}
\item \textit{Lemma  \ref{lemma_stepwise_ext1}} (\cite[Lemma 5.4]{Ad} in the published paper): In the published paper the chain of extensions starts with a subalgebra $\mathbf{S}$ of $\mathbf{T}=\mathbf{2}^{r}\times\prod_{i=1}^q\widehat{\mathbf{F}_{f(i)}}$ isomorphic to $\mathbf{S}\cong\widehat{\mathbf{F}_{\ell}}$, $0\leq\ell\leq\max\{f(i)\colon 1\leq i\leq q\}$. Now we have $\mathbf{S}\cong\mathbf{2}^{r}\times\prod_{i=1}^p\widehat{\mathbf{F}_{f(i)}}$, $p<q$.
\item \textit{Lemma  \ref{lemma_stepwise_ext2}} (corresponding to \cite[Lemma 5.5]{Ad}): In the published paper the chain of extensions starts with a subalgebra $\mathbf{S}$ of $\mathbf{T}=\mathbf{2}^{r'}\times\prod_{i=1}^q\widehat{\mathbf{F}_{f(i)}}$ satisfying $\mathbf{S}\cong\prod_{i=1}^q\widehat{\mathbf{F}_{f(i)}}$. The chain of extensions $\mathbf{T}_0,\ldots,\mathbf{T}_r$ can canonically be obtained by adjoining an explicit sequence of Boolean elements of $\mathbf{T}$.
Without \cite[Lemma 5.3]{Ad}  the chain to consider starts with $\mathbf{S}\cong\mathbf{2}^{r}\times\prod_{i=1}^q\widehat{\mathbf{F}_{f(i)}}$, $r<r'$. The chain is obtained by splitting a Boolean atom of $\mathbf{S}$ that is not an atom of $\mathbf{T}$.
\item \textit{Lemma  \ref{lemma_spec_ext_case1}} (corresponding to \cite[Lemma 5.6]{Ad}): In the published version we have $\mathbf{S}\cong\prod_{i=1}^q\widehat{\mathbf{F}_{f(i)}}$  and  $\mathbf{T}\cong\prod_{i=1}^{q+1}\widehat{\mathbf{F}_{f(i)}}$. Now we have $\mathbf{S}\cong\mathbf{2}^{p}\times\prod_{i=1}^q\widehat{\mathbf{F}_{f(i)}}$ and $\mathbf{T}\cong\mathbf{2}^{p}\times \prod_{i=1}^{q+1}\widehat{\mathbf{F}_{f(i)}}$.
\item \textit{Lemma  \ref{lemma_spec_ext_case2}} (corresponding to \cite[Lemma 5.7]{Ad}): In the published version we have $\mathbf{S}\cong\prod_{i=1}^q\widehat{\mathbf{F}_{f(i)}}$  and  $\mathbf{T}\cong\prod_{i=1}^{q-1}\widehat{\mathbf{F}_{f(i)}}\times\widehat{\mathbf{F}_{f(q)+1}}$. Now we have $\mathbf{S}\cong\mathbf{2}^{p}\times\prod_{i=1}^q\widehat{\mathbf{F}_{f(i)}}$ and $\mathbf{T}\cong\mathbf{2}^{p}\times \prod_{i=1}^{q-1}\widehat{\mathbf{F}_{f(i)}}\times\widehat{\mathbf{F}_{f(q)+1}}$.
\end{itemize}

The syntactic Lemma \ref{lemma_spec_ext_case3} (corresponding to \cite[Lemma 5.8]{Ad})  remains unchanged.

\section{Introduction}\label{sec_introduction}

Given a first-order theory $T$ a model companion of $T$ is an extension $T^*$ such that under very general assumptions on $T$ the class of first-order structures $\Mod(T^*)$ satisfying $T^*$ consists exactly of the existentially closed models of $T$. In this case we use the notion of model companion of a theory $T$ also to denote the class of the existentially closed models of $T$, and we speak of {\em the} model companion of $T$.

As $\mathcal{PCSL}$ consists of the models of a theory $\Sigma$ satisfying these assumptions determining a model companion $\Sigma^*$ amounts to axiomatizing $\mathcal{PCSL}^{ec}$.

Our work is based on the finite axiomatization of $\mathcal{PCSL}^{ac}$ in \cite{RAS}. We extend the axiomatization given there by five axioms to obtain a finite axiomatization of the subclass $\mathcal{PCSL}^{ec}$ of $\mathcal{PCSL}^{ac}$, thus of the model companion of $\mathcal{PCSL}$.

The paper is organized as follows: Section \ref{sec_p_semilattices} provides
the basic properties and algebraic notions concerning pseudocomplemented
semilattices, p-semilattices for short, while Section \ref{sec_model_theory} presents a summary of the relevant
model-theoretic concepts.

In Section \ref{sec_ac_pcs} we consider algebraically closed p-semilattices. We present the semantic characterization of the class $\mathcal{PCSL}^{ac}$ that is the basis of its finite axiomatization. The four axioms \ref{AC1}--\ref{AC4}, which together with the identities \eqref{equation_idempotent}--\eqref{equation_least_element}, \eqref{equation_equiv1}--\eqref{equation_equiv3} charac\-te\-ri\-ze $\mathcal{PCSL}^{ac}$, are listed.

Finally, in Section \ref{sec_finite_ax_ec} we tackle the proof of this paper's title. Before showing that axioms \ref{EC1}--\ref{EC5} are sufficient in the proof of the crucial result ---Theorem \ref{theorem_sufficiency}--- existential closedness of a p-semilattice is reduced to the extendability of subalgebras that are finite subdirectly irreducible p-semilattices to finite direct products of such p-semilattices. The necessary lemmas to deal with the occurring cases are proved beforehand.

\section{Pseudo\-com\-ple\-men\-ted semi\-lattices}\label{sec_p_semilattices}
A meet-semilattice with 0 is an algebra $\langle P;\wedge,0\rangle$ axiomatized by the identities
%
\begin{align}\label{equation_idempotent}
x\w x&=x,\\
\label{equation_commutative}
x\w y&=y\w x,\\
\label{equation_associative}
(x\w y)\w z&=x\w(y\w z),\\
\label{equation_least_element}
0\w x&=0.
\end{align}

A {\em p-semilattice} $\langle P;\wedge,^*,0\rangle$ is a meet-semilattice with 0 with an additional unary operation $^*$ that satisfies the equivalence
\begin{equation}\label{equivalence_pseudocomplement}
x\w y=0 \longleftrightarrow x\w y^*=x.
\end{equation}
Defining $x\leq y$ if $x\w y=x$ it follows from \eqref{equation_idempotent}--\eqref{equation_least_element} that $\langle P;\leq\rangle$ is a partial order with least element 0 and $x\w y=\inf\{x,y\}$. Furthermore, \eqref{equivalence_pseudocomplement} amounts to $y^*$ being the greatest element disjoint from $y$, where two elements are called disjoint if their meet is 0.
%
From \eqref{equation_idempotent}--\eqref{equivalence_pseudocomplement} we immediately obtain the very useful properties
\begin{align}
\label{implication_decreasing}
x\leq y &\implies y^*\leq x^*,\\
\label{inequality_double_complement}
x&\leq x^{**},\\
\label{equation_three_stars}
x^*&= x^{***},\\
\label{equation_two_stars}
(x\w y)^{**}&= x^{**}\w y^{**}.
\end{align}
Obviously, $1:=0^*$ is the greatest element of $P$. We define $x\parallel y$ to hold if neither $x\leq y$ nor $y\leq x$ holds. A minimal element of $P$ different from 0 is called an {\it atom}, a maximal element different from 1 is called an {\it anti-atom}. An element $d$ of $P$ satisfying $d^*=0$ is called {\it dense}, and if additionally $d\not=1$ holds, then $d$ is called a {\it proper dense} element. For $\mathbf{P}\in\mathcal{PCSL}$ the set $\D(\mathbf{P})$ denotes the subset of dense elements of $\mathbf{P}$ with $\langle \D(\mathbf{P});\wedge\rangle$ being a filter of $\langle P;\wedge\rangle$. An element $s$ is called {\it skeletal} if $s^{**}=s$. The subset of skeletal elements of $\mathbf{P}$ is denoted by $\Sk(\mathbf{P})$. We will write $\Sk(x)$ for $x\in\Sk(\mathbf{P})$ and $\D(d)$ for $d\in\D(\mathbf{P})$. From \eqref{equation_three_stars} follows $\Sk(\mathbf{P})=\set{ x^*: x\in P}$. In
$\Sk(\mathbf{P})$  the supremum of two elements exists with $\sup_{\Sk}\{a,b\}=(a^*\wedge b^*)^*$ for $a,b\in\Sk(\mathbf{P})$. Instead of $\sup_{\Sk}\{a,b\}$ we use the shorter $a\dot{\vee} b$, assuming $a,b\in\Sk(\mathbf{P})$, which follows from \eqref{implication_decreasing} and \eqref{inequality_double_complement}. Observe that $\langle\Sk(\mathbf{P});\w,\dot{\vee},^*,0,1\rangle$ is a Boolean algebra. In the subset $\Sk(\mathbf{P})$ of skeletal elements we consider the subset
$\C(\mathbf{P}):=\set {c\in\Sk(\mathbf{P}): x\geq c
\And
x\geq c^*\longra x=1\text{ for all }x\in P}$ of \textit{central} elements of $\mathbf{P}$.

From \eqref{equation_three_stars} and \eqref{equation_two_stars} we obtain
\begin{equation}\label{equation_dense_star}
\Sk(b)\And\D(d)\quad\Longrightarrow\quad(d\w b)^*=b^*,
\end{equation}
which will be used among else to show that certain sets are closed under the operation $^*$: \[(d\w b)^*=(d\w b)^{***}=((d\w b)^{**})^*=(d^{**}\w b^{**})^*=(0^*\w b^{**})^*=b^{***}=b^*\]

Equation \eqref{equation_dense_star} means that the pseudocomplement $x^*$ of a meet $x=d\w b$ of a dense and a skeletal element is again the meet of a dense and a skeletal element as $x^*=b^*=1\w b^*$.

Balbes and Horn \cite{BaHo} showed, assuming \eqref{equation_idempotent}--\eqref{equation_least_element}, that \eqref{equivalence_pseudocomplement} is equivalent to the identities
\begin{align}\label{equation_equiv1}
x\w (x\w y)^*&=x\w y^*,\\
\label{equation_equiv2}
0^*\w x&= x,\\
\label{equation_equiv3}
0^{**}&=0.
\end{align}
Thus the class $\mathcal{PCSL}$, axio\-ma\-ti\-zed by the set of iden\-ti\-ties $\Sigma:=\{\eqref{equation_idempotent},\allowbreak\eqref{equation_commutative},\allowbreak\eqref{equation_associative},\eqref{equation_least_element},\allowbreak\eqref{equation_equiv1},\eqref{equation_equiv2},\eqref{equation_equiv3}\}$, is equational. As an equational class $\mathcal{PCSL}$ is closed under products, subalgebras and homomorphisms. Therefore, every p-semilattice is a subdirect product of subdirectly irreducible p-semilattices, thus a subalgebra of a direct product of subdirectly irreducible p-se\-mi\-lat\-tices. Jones \cite{Jo} showed that $\mathcal{PCSL}$ is finitely generated by $\mathbf{3}$, the p-semilattice order-isomorphic to the three-element chain $0<e<1$. With \cite[Corollary 3.8]{Be} we obtain that the class  $\mathcal{PCSL}$ is {\em locally finite}. This fact is also proved in \cite{Jo}, where it is shown that a free p-semilattice with finitely many generators is finite (Corollary 3.1).

To characterize the subdirectly irreducible p-semilattices we define for any p-semilattice $\mathbf{P}$ the
p-semilattice $\widehat{\mathbf{P}}$ to be the p-semilattice obtained
from $\mathbf{P}$ by adding a new top element. The maximal proper dense
element of $\widehat{\mathbf{P}}$ is denoted by
$e$. Jones \cite{Jo} showed that the p-semilattices $\widehat{\mathbf{B}}$ with $\mathbf{B}$
being a Boolean algebra are exactly the subdirectly irreducible
p-semilattices. Moreover, let $\mathbf{2}$ denote the two-element Boolean
algebra, $\mathbf{F}_n$ the $n$-atom Boolean algebra and $\mathbf{A}$ the countable atomfree Boolean algebra interpreted as p-semilattices. $\mathbf{F}_0$ then is the one-element Boolean algebra and $\widehat{\mathbf{F}_0}=\mathbf{2}$.

For a p-semilattice $\mathbf{P}$ and an arbitrary element $a\in P$ the binary relation $x\theta_a y:\Longleftrightarrow a\wedge x=a\wedge y$ is a congruence. The factor algebra $\mathbf{P}/\theta_a$
is isomorphic to $\mathbf{P'}:=\langle\{a\wedge x\colon x\in P\};\cdot,',0\rangle$ where $\langle P';\wedge,0\rangle$ is the sub-meet semilattice  of $\langle P;\wedge,0\rangle$ and $'$ the associated pseudocomplementation.
Given the direct product $\prod_{i=1}^n \mathbf{P}_i$ and $a=(0,\ldots,0,1,\ldots,1)$ with the first $k$ places being 0, the factor algebra $\left(\prod_{i=1}^n \mathbf{P}_i\right)/\theta_a$ is isomorphic to $\prod_{i=k+1}^n \mathbf{P}_i$.
Furthermore, the map $\nu_a\colon P\to P/\theta_a$ defined by $\nu_{a}(x)=a\wedge x$ is a surjective homomorphism.


Finally, we need the notion of a {\it homomorphism over a set}: Let $\mathbf{P}$ and $\mathbf{Q}$ be p-semilattices, $\{a_1,\ldots,a_m\}$ a subset of $P\cap Q$. We say a homomorphism
$f\colon P\to Q$
is {\em over}
$\{a_1,\ldots,a_m\}$ if $f(a_i)=a_i$ holds for $1\leq i\leq m$. If in this situation $f$ is an isomorphism we say that $\mathbf{P}$ and $\mathbf{Q}$ are {\it isomorphic over} $\{a_1,\ldots,a_m\}$ and write $\mathbf{P}\cong_{\{a_1,\ldots,a_m\}}\mathbf{Q}$.

For more background on p-semilattices in general consult \cite{Fr} and \cite{Jo}, for the notions concerning the problem tackled in this paper consult \cite{RAS}.

\section{Model theory}\label{sec_model_theory}
For a first-order language $\mathcal{L}$ and an $\mathcal{L}$-structure $\mathbf{M}$ with universe $M$ the language $\mathcal{L}(M)$ is obtained by adding a constant symbol for every $m\in M$.
To define the notion of model companion we first have to define the notion of model completeness. An $\mathcal{L}$-theory $T$ is said to be \emph{model complete} if for every model $\mathbf{M}$ of $T$ the set of $\mathcal{L}$-sentences $T\cup\diag(\mathbf{M})$ is complete, where $\diag(\mathbf{M})$ is the set of atomic and negated atomic $\mathcal{L}(M)$-sentences that hold in $\mathbf{M}$. $T^*$ is said to be a \emph{model companion} of $T$ if (i) every model of $T^*$ is embeddable in a model of $T$ and vice versa and (ii) $T^*$ is model complete.

An $\mathcal{L}$-structure $\mathbf{M}$ is called \emph{algebraically closed} in a class of
$\mathcal{L}$-structures $\mathfrak{M}$ if $\mathbf{M}$ satisfies every positive existential
$\mathcal{L}(M)$-sentence that happens to hold in some
extension $\mathbf{M}'$ of $\mathbf{M}$ with $\mathbf{M}'\in \mathfrak{M}$. This means for a first-order language $\mathcal{L}$ without relation symbols ---as is the case for  $\mathcal{PCSL}$--- that
an $\mathcal{L}$-structure $\mathbf{M}$ is algebraically closed in $\mathfrak{M}$ if and only if every finite system of $\mathcal{L}$-equations with
coefficients from $M$ that is solvable in some $\mathbf{M}'\in\mathfrak{M}$ with $\mathbf{M}\leq\mathbf{M}'$ already has a solution in $M$. The stronger
notion of being \emph{existentially closed} differs from algebraically closed by allowing all existential $\mathcal{L}(M)$-sentences, thus allowing also
negated equations. Finally, $\mathfrak{M}^{ac}$ and $\mathfrak{M}^{ec}$ denote the subclass of algebraically and existentially closed models of $\mathfrak{M}$, respectively.

In the class of fields
existential and algebraic closedness coincide: If $\mathbf{K}$ is a field and
$p\left(\overrightarrow{x}\right)$ and $q\left(\overrightarrow{x}\right)$ are polynomials over $\mathbf{K}$, then the
satisfiability of the negated equation
$p\left(\overrightarrow{x}\right)\not=q\left(\overrightarrow{x}\right)$ is equivalent to the satisfiability of the
equation $x\cdot\left(p\left(\overrightarrow{x}\right)-q\left(\overrightarrow{x}\right)\right)=1$ assuming $x$ is not among the variables $\ora{x}$. Thus every system of
negated equations over $\mathbf{K}$ can be replaced by a system of equations.

However, the following examples show that this is not the general situation: In the class of Boolean algebras every Boolean algebra is algebraically closed whereas a Boolean algebra $\mathbf{B}$ is existentially
closed if and only if $\mathbf{B}$  is atomfree. An abelian group $\mathbf{G}$ is algebraically closed if and only if $\mathbf{G}$ is divisible, whereas $\mathbf{G}$  is existentially closed if and only if $\mathbf{G}$  is divisible and contains an
infinite direct sum of copies of $\Q/\Z$ (as a module). For a more detailed description of the notion of algebraic and existential closedness we refer the reader to \cite{Mac}.

There is the following close relationship between a model companion $T^*$ of $T$ and the class of its existentially closed models $\Mod(T)^{ec}$. If $T$ is \emph{inductive} ---that is, $\Mod(T)$ is closed under the union of chains--- then we have $\Mod(T^*)=\Mod(T)^{ec}$. Thus any axiomatization of the existentially closed models of $T$ is a model companion of $T$ if $T$ is inductive.

As $\mathcal{PCSL}$ is a finitely generated universal Horn class with both the joint embedding and the amalgamation property, $\mathcal{PCSL}$ has a model companion. The joint embedding property is \cite[Theorem 6.1]{Jo},  the amalgamation property is \cite[Theorem 9.1]{Jo}. The model companion need not exist with groups and commutative rings serving as examples. Because the set of identities $\Sigma$ axiomatizing $\mathcal{PCSL}$ is inductive, we have $\Mod(\Sigma^*)=\mathcal{PCSL}^{ec}$.
\section{The class $\mathcal{PCSL}^{ac}$}\label{sec_ac_pcs}

On various occasions we will use the following ---semantic---
characterization of algebraically closed p-semilattices, established
in \cite{Sc3}.

\begin{theorem}\label{theorem_ac_char}
A p-semilattice $\mathbf{P}$ is algebraically closed if and only if for any finite
subalgebra $\mathbf{S}\leq \mathbf{P}$ there exist $r,s\in\N$ and a p-semilattice $\mathbf{S'}$ isomorphic
to $\mathbf{2}^r\times\left(\widehat{\mathbf{A}}\right)^s$ such that
$\mathbf{S}\leq \mathbf{S'}\leq \mathbf{P}$.
\end{theorem}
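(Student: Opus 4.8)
The plan is to prove both implications, writing $\mathbf{S}'\cong\two^r\times(\widehat{\mathbf{A}})^s$ throughout. I will use the following standard reformulation, available because $\mathcal{PCSL}$ is finitely generated (hence locally finite) and has the amalgamation property: $\mathbf{P}$ is algebraically closed if and only if for every finite subalgebra $\mathbf{S}\leq\mathbf{P}$ and every finite p-semilattice $\mathbf{T}\geq\mathbf{S}$ there is a homomorphism $\mathbf{T}\to\mathbf{P}$ which is the identity on $\mathbf{S}$. Indeed, a solution in an extension of $\mathbf{P}$ of a finite system of equations over $\mathbf{S}$, together with $\mathbf{S}$, generates such a finite $\mathbf{T}$; conversely, by amalgamation every finite $\mathbf{T}\geq\mathbf{S}$ embeds over $\mathbf{S}$ into some extension of $\mathbf{P}$, and a homomorphism $\mathbf{T}\to\mathbf{P}$ fixing $\mathbf{S}$ carries the generators of $\mathbf{T}$ to a solution in $\mathbf{P}$.

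\emph{Direction ``$\Leftarrow$''.} Assume the stated condition; let finite $\mathbf{S}\leq\mathbf{P}$ and finite $\mathbf{T}\geq\mathbf{S}$ be given, and choose $\mathbf{S}'$ with $\mathbf{S}\leq\mathbf{S}'\leq\mathbf{P}$, $\mathbf{S}'\cong\two^r\times(\widehat{\mathbf{A}})^s$. By the reformulation it suffices to extend the inclusion $\mathbf{S}\hookrightarrow\mathbf{S}'$ to a homomorphism $\mathbf{T}\to\mathbf{S}'$. Since a homomorphism into a direct product is a tuple of homomorphisms into the factors, this reduces to two extension problems: every homomorphism $\mathbf{S}\to\two$ extends to $\mathbf{T}\to\two$, and every homomorphism $\mathbf{S}\to\widehat{\mathbf{A}}$ extends to $\mathbf{T}\to\widehat{\mathbf{A}}$. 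For the first, $x\mapsto x^{**}$ is a surjective homomorphism $\mathbf{R}\to\Sk(\mathbf{R})$ onto a boolean algebra through which every homomorphism of $\mathbf{R}$ into a boolean algebra factors, and $\mathbf{R}\leq\mathbf{R}'$ induces $\Sk(\mathbf{R})\leq\Sk(\mathbf{R}')$; so the problem amounts to extending an ultrafilter of $\Sk(\mathbf{S})$ to $\Sk(\mathbf{T})$, which is always possible. For the second, a homomorphism $\mathbf{S}\to\widehat{\mathbf{A}}$ is determined by a boolean homomorphism $\Sk(\mathbf{S})\to\Sk(\widehat{\mathbf{A}})\cong\mathbf{A}$ together with a choice, among the finitely many elements whose double pseudocomplement is mapped to the top, of which are sent to the proper dense element $e$ and which to the top; one extends the boolean homomorphism using that $\mathbf{A}$, being atomfree, admits extensions of homomorphisms from finite subalgebras along any finite enlargement, and then extends the finitely many remaining choices (a routine extension of a prime filter on a finite meet-semilattice). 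Composing the resulting $\mathbf{T}\to\mathbf{S}'$ with $\mathbf{S}'\hookrightarrow\mathbf{P}$ finishes this direction.

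\emph{Direction ``$\Rightarrow$''.} Assume $\mathbf{P}$ algebraically closed and fix finite $\mathbf{S}\leq\mathbf{P}$. Every finite p-semilattice embeds subdirectly into some $\two^r\times\prod_{i=1}^s\widehat{\mathbf{B}_i}$ with the $\mathbf{B}_i$ finite boolean (the subdirectly irreducibles being the $\widehat{\mathbf{B}}$), so it is enough to build, for suitable $r,s$, an increasing $\omega$-chain of finite subalgebras $\mathbf{S}=\mathbf{S}_0\leq\mathbf{S}_1\leq\cdots\leq\mathbf{P}$ whose union is isomorphic to $\two^r\times(\widehat{\mathbf{A}})^s$. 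The chain is built stepwise: given $\mathbf{S}_n$, apply the reformulation to enlarge it inside $\mathbf{P}$ so as to split every skeletal element of $\mathbf{S}_n$ that is not yet an atom of $\Sk(\mathbf{P})$ (a finite system of equations in skeletal elements, solvable inside $\mathbf{P}$), and to saturate the dense part and its gluing to the skeleton, so that in the limit the dense filter is a finite boolean lattice $\two^s$ sitting over the skeleton as in $\two^r\times(\widehat{\mathbf{A}})^s$. One then checks that $\mathbf{S}'=\bigcup_n\mathbf{S}_n$ has dense filter $\cong\two^s$ and skeleton $\cong\two^r\times\mathbf{A}^s$ with its $r$ atoms central, and that a p-semilattice with these invariants and all of whose subdirectly irreducible images are of the form $\widehat{\mathbf{B}}$ must be $\cong\two^r\times(\widehat{\mathbf{A}})^s$; the last step is organized using the homomorphisms $\nu_a$ and the factorizations $(\prod\mathbf{P}_i)/\theta_a$ recalled in Section~\ref{sec_preliminaries}.

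The main obstacle is precisely this ``$\Rightarrow$'' construction. Algebraic closedness solves systems of equations but never inequations, so one can never realize a \emph{copy} of a prescribed finite extension of $\mathbf{S}$ inside $\mathbf{P}$ -- only a homomorphic image is guaranteed -- and in particular no new proper dense element can be created and no atom of $\Sk(\mathbf{P})$ can be split. Hence $r$ and $s$ are not free: they are forced by how many atoms of $\Sk(\mathbf{P})$ and how large a boolean lattice of proper dense elements $\mathbf{P}$ already carries around $\mathbf{S}$. The delicate points are therefore to verify that each positive enlargement step really can be carried out inside $\mathbf{P}$ and that the union does not degenerate -- that the dense filter stabilizes to exactly $\two^s$ and the skeleton exhausts exactly $\two^r\times\mathbf{A}^s$ -- which requires careful bookkeeping of which finite configurations over $\mathbf{S}$ are equationally realizable in $\mathbf{P}$.
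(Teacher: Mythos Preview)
The paper does not prove Theorem~\ref{theorem_ac_char}. It is quoted verbatim from \cite{Sc3} (Schmid, \emph{Algebraically closed p-semilattices}, Arch.\ Math.\ 45 (1985), 501--510) and used as a black box throughout; the sentence introducing it reads ``established in \cite{Sc3}''. So there is no in-paper proof to compare your proposal against.

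On the substance of your attempt: the ``$\Leftarrow$'' direction is essentially a proof that $\two$ and $\widehat{\mathbf{A}}$ are injective in $\mathcal{PCSL}$ relative to embeddings of finite algebras, and your sketch is along the right lines (factor through $\Sk$ for maps into $\two$; for maps into $\widehat{\mathbf{A}}$, separate the boolean part from the single proper dense level). This is indeed how Schmid's argument goes. Your ``$\Rightarrow$'' direction, however, is not a proof but a programme, as you yourself flag: you never specify how $r$ and $s$ are read off from $\mathbf{S}$ and $\mathbf{P}$, nor why the successive equational enlargements do not collapse under the homomorphic image that algebraic closedness hands you, nor why the union has the claimed isomorphism type rather than some proper retract of it. Schmid's original proof handles exactly these points via a careful analysis of which finite products $\two^r\times\prod_i\widehat{\mathbf{F}_{n_i}}$ can be realized over $\mathbf{S}$ inside $\mathbf{P}$ and a limiting argument; your outline gestures at this but does not carry it out. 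If you want a self-contained proof, that is the part requiring real work.
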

In \cite{RAS} the list of axioms below is introduced to
axiomatize the class of algebraically closed p-semilattices. These axioms as well as the axioms \ref{EC1}--\ref{EC5} introduced in Section \ref{sec_finite_ax_ec} to axiomatize existential closedness are $\forall\exists$-sentences. The $\forall$-quantified variables represent constants in a p-semilattice $\mathbf{P}$, whereas the $\exists$-quantified variables represent elements that exist in an extension $\mathbf{Q}$ and so must exist in $\mathbf{P}$ if $\mathbf{P}$ is existentially closed. Each of these two types of variables can represent either an arbitrary element, a skeletal element or a dense element. Therefore we distinguish six types of variables. To identify the variables easily within these axioms we adopt the following conventions:
\begin{itemize}
\item $a,a_1,a_2,\ldots$ for $\forall$-quantified arbitrary constants,
\item $b,b_1,b_2,\ldots$ for $\forall$-quantified skeletal (Boolean) constants,
\item $d,d_1,d_2,\ldots$ for $\forall$-quantified dense constants,
\item $x,x_1,x_2,\ldots$ for $\exists$-quantified arbitrary elements,
\item $y,y_1,y_2,\ldots$ for $\exists$-quantified skeletal (Boolean) elements,
\item $z,z_1,z_2,\ldots$ for $\exists$-quantified dense elements.
\end{itemize}

\begin{definition}\label{def_finax_ac}
Let $\mathbf{P}$ be a p-semilattice. $\mathbf{P}$ will be said to satisfy 
 \setenumerate{label=(AC\theenumi)}
\begin{enumerate}
\item\label{AC1} if
\begin{multline*}
(\forall a_1,a_2,a_3)(a_3\geq a_1\w a_2\lra \\
(\exists
x_1,x_2)( x_1\geq a_1\And x_2\geq a_2\And x_1 \wedge x_2 = a_3)),
\end{multline*}
\item\label{AC2}
if
\begin{multline*}
(\forall a,d_1,d_2,d_3)(d_1<d_2<d_3\And   a\wedge d_1 < a\wedge d_2 < a \wedge d_3\lra\\
 (\exists z)(d_1 <z < d_3\And  z \wedge d_2= d_1\And  a \wedge d_1 < a \wedge z < a \wedge d_3)),
\end{multline*}
\item\label{AC3}  if
\begin{multline*}
(\forall d,d_m,b,b_1,b_2,a\in P)(d\parallel d_m\And
b_1 \leq d_m\And
    b_2 \leq d\And \\
    b_2 \not\leq d_m\And
    b \leq d \And
    b^* \w b_1 \not\leq d\And
    a^* \leq d_m\lra \\
(\exists y)(b \leq y \leq d\And   y^* \w b_1 \not\leq d\And y \w b_2 \not\leq d_m\And (y \w a)^* \leq d_m)),
\end{multline*}
\item\label{AC4} if
\[
(\forall b,d)(b< d<1\lra
(\exists  y)(b_1 < y < d\And b\bsup y^* < d)).
\]
\end{enumerate}
\end{definition}

The following theorem, the main result of \cite{RAS}, states that the preceding list of axioms together with a finite axiomatization of the class $\mathcal{PCSL}$ is a finite axiomatization of the class $\mathcal{PCSL}^{ac}$.

\begin{theorem}\label{theorem_finite ax_ac}
A p-semilattice $\mathbf{P}$ is algebraically closed if and only if
$\mathbf{P}$ satisfies the axioms \ref{AC1}--\ref{AC4}.
\end{theorem}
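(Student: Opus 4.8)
The plan is to prove the two directions of Theorem~\ref{theorem_finite ax_ac} by leveraging the semantic characterization of Theorem~\ref{theorem_ac_char} together with the fact (from \cite{AbBu}) that $\mathcal{PCSL}$ itself is finitely axiomatizable, so that the only real content is showing that \ref{AC1}--\ref{AC4} capture, within $\mathcal{PCSL}$, exactly the condition ``every finite subalgebra embeds into a subalgebra isomorphic to $\two^r\times(\widehat{\mathbf A})^s$''.

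\textbf{The easy direction.} First I would show that if $\mathbf P$ is algebraically closed, then $\mathbf P\models\text{\ref{AC1}}$--\text{\ref{AC4}}. By Theorem~\ref{theorem_ac_char} it suffices to verify each axiom in every p-semilattice of the form $\two^r\times(\widehat{\mathbf A})^s$, because each axiom is a $\forall\exists$ sentence whose hypotheses involve only finitely many parameters: given the parameters witnessing the antecedent in $\mathbf P$, they generate a finite subalgebra $\mathbf S$, which embeds in some $\mathbf S'\cong\two^r\times(\widehat{\mathbf A})^s$ with $\mathbf S\leq\mathbf S'\leq\mathbf P$; one finds the required witness inside $\mathbf S'$ (using that $\mathbf A$ is atomfree and hence has ``room'' to interpolate dense elements, split skeletal elements, etc.), and it remains a valid witness in $\mathbf P$ since $\mathbf S'$ is a subalgebra. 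So this direction reduces to four concrete, if slightly tedious, computations in $\two^r\times(\widehat{\mathbf A})^s$: \ref{AC1} expresses a factorization/interpolation property of meets, \ref{AC2} the density of $\D(\widehat{\mathbf A})$-type chains, \ref{AC3} a splitting property of skeletal elements relative to pairs of incomparable dense elements, and \ref{AC4} the atomlessness of the boolean part reflected in the dense filter. Each holds in $\widehat{\mathbf A}$ by atomfreeness and passes to finite products coordinatewise (using the congruence/factor machinery $\theta_a$ recalled in Section~\ref{sec_preliminaries}).

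\textbf{The hard direction.} Conversely, suppose $\mathbf P\in\mathcal{PCSL}$ satisfies \ref{AC1}--\ref{AC4}; I must produce, for an arbitrary finite $\mathbf S\leq\mathbf P$, an intermediate $\mathbf S'\cong\two^r\times(\widehat{\mathbf A})^s$ with $\mathbf S\leq\mathbf S'\leq\mathbf P$, and then invoke Theorem~\ref{theorem_ac_char}. The natural strategy is a union-of-chains construction: starting from $\mathbf S=\mathbf S_0$, repeatedly enlarge the current finite subalgebra $\mathbf S_n\leq\mathbf P$ so as to repair one ``defect'' — a missing interpolant, an unsplit skeletal element, a boolean part that is not yet a power of $\two$, a dense part that is not yet of the right form — using the existential witnesses guaranteed by the appropriate axiom, and organize the bookkeeping so that in the countable union $\mathbf S'=\bigcup_n\mathbf S_n$ every defect has been cured. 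One then checks that a countable p-semilattice all of whose finite subalgebras are ``defect-free'' in this sense, and which is closed under the relevant witnesses, must be isomorphic to $\two^r\times(\widehat{\mathbf A})^s$: the skeletal part $\Sk(\mathbf S')$ is an atomfree-or-finite boolean algebra that decomposes via its central elements $\C(\mathbf S')$ into a finite boolean factor $\two^r$ and an atomfree factor, and axioms \ref{AC2}--\ref{AC4} force the dense filter above the atomfree part to look like $\D\bigl((\widehat{\mathbf A})^s\bigr)$, i.e.\ to split off exactly $s$ copies of the extra top element $e$ and nothing more. The main obstacle I anticipate is precisely this last structural identification: translating the syntactic interpolation/splitting statements \ref{AC2}--\ref{AC4} into the claim that the dense part is ``as homogeneous as that of a finite power of $\widehat{\mathbf A}$'', controlling the number $s$ of $\widehat{\mathbf A}$-factors and ensuring the amalgamated witnesses never introduce new central elements or new ``levels'' of dense elements beyond those forced. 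Since this is the main theorem of \cite{RAS}, I would expect the argument to follow that source; here I only sketch the reduction via Theorem~\ref{theorem_ac_char} and the chain construction, and refer to \cite{RAS} for the detailed verification.
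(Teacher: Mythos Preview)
The paper does not prove Theorem~\ref{theorem_finite ax_ac} at all: it is introduced as ``the main result of \cite{RAS}'' and simply quoted, with no proof block following the statement. Your proposal correctly recognizes this at the end and defers to \cite{RAS}, so in that sense your handling is consistent with the paper's. Your sketch goes well beyond what the present paper offers, and as a plausibility outline it is reasonable; one small caveat is that in the easy direction the reduction ``verify each axiom in $\two^r\times(\widehat{\mathbf A})^s$'' needs exactly the care you add in parentheses (the parameters generate a finite subalgebra, apply Theorem~\ref{theorem_ac_char}, find the witness in $\mathbf S'$), since an algebraically closed $\mathbf P$ need not itself be of that form. For the hard direction your chain-and-repair outline is in the right spirit, but since neither you nor the paper actually carries it out, the honest statement is that both simply cite \cite{RAS}.
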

The following lemma will be used in Lemma \ref{lemma_criterion_ec} to put a simplifying assumption on the set of coefficients $S$ of a system of equations and negated equations that can be solved in an extension of a p-semilattice $\mathbf{P}$  whose existential closedness is to be shown.
\begin{lemma}\label{lemma_fin_dir_prod}
Let $\mathbf{P}$ be a p-semilattice satisfying \ref{AC1}--\ref{AC4}. Then for every finite
subalgebra $\mathbf{S}\leq \mathbf{P}$ there exists $\mathbf{S}'\leq \mathbf{P}$ such that $\mathbf{S}\leq \mathbf{S'}$ and $\mathbf{S}'\cong\mathbf{2}^k\times\prod_{i=1}^\ell\widehat{\mathbf{F}_{h(i)}}$ with $k,\ell\in\N$, $h\colon\{1,\ldots,\ell\}\rightarrow\N\setminus\{0\}$.
\end{lemma}
\begin{proof}
The statement of the lemma is an intermediate result in the proof of \cite[Theorem 9.1]{RAS}.
\end{proof}
%
%
\section{A finite axiomatization of $\mathcal{PCSL}^{ec}$}\label{sec_finite_ax_ec}
%
The following criterion for a p-semilattice to be existentially closed will determine this section.
\begin{lemma}\label{lemma_criterion_ec}
A  p-semilattice $\mathbf{P}$ satisfying axioms \ref{AC1}--\ref{AC4} is existentially closed if 
\setenumerate{label=(\theenumi)}
\begin{enumerate}
\item\label{lemma_criterion_ec_item1} $\mathbf{S}\leq\mathbf{P}\leq\mathbf{Q}$, $\mathbf{S}\leq\mathbf{T}\leq\mathbf{Q}$
\item\label{lemma_criterion_ec_item2} 
\begin{equation}\label{eq_apprex}
      \mathbf{S}\cong\mathbf{2}^r\times\prod_{i=1}^{s_1}\mathbf{F}_{f_1(i)}\times\prod_{i=1}^{s_2}\widehat{\mathbf{F}_{f_2(i)}}\quad (r,s_1,s_2\in\N)
\end{equation}
%
\begin{equation}\label{eq_v1_vm}
\mathbf{T}\cong\mathbf{2}^{r'}\times\prod_{i=1}^{s'}\widehat{\mathbf{F}_{g(i)}} \quad(r',s', g(i)\in\N),
\end{equation}
where we have $r\leq r'$, $s_1+s_2\leq s'$, $1\leq f_1(i)\leq g(i)$  ($1\leq i\leq s_1$), $1\leq f_2(i)\leq g(s_1+i)$, ($1\leq i\leq s_2$) and $1\leq g(i)$ ($s_1+s_2+1\leq i\leq s'$) because of $\mathbf{S}\leq\mathbf{T}$,
\end{enumerate}
imply the existence of a subalgebra $\mathbf{S'}$ of $\mathbf{P}$ with $\mathbf{S}\leq\mathbf{S'}\cong_S\mathbf{T}$.
\end{lemma}
\begin{proof}
First we observe that $\mathbf{P}$ is existentially
closed if for any extension $\mathbf{Q}$ of
$\mathbf{P}$ with $a_1,\ldots,a_m\in P$ and $v_1,\ldots,v_n\in Q$
arbitrary, there exist $u_1,\ldots,u_n\in P$ such that $\Sg{P}(\{
a_1,\ldots,a_m,u_1,\ldots,u_n\})$ and $\Sg{Q}(\{
a_1,\ldots,a_m,v_1,\ldots,v_n\})$ are isomorphic over $\{
a_1,\ldots,a_m\}$:

Every finite system of equations and negated equations with coefficients
$a_1,\ldots,a_m\in P$ corresponds to a formula
$\varphi(\ora{x},\ora{a})$, with
$\varphi$ being a quantifier-free $\mathcal{L}(\mathbf{P})$-formula. If $\mathbf{Q}\models(\exists
\ora{x})\varphi(\ora{x},\ora{a})$, say
$\mathbf{Q}\models\varphi(\overrightarrow{w},\overrightarrow{a})$,
then there exist $r_1,\ldots,r_n\in P$ such that by assuming the assumption of the preceding paragraph $\Sg{P}(\{
a_1,\ldots,a_m,r_1,\ldots,r_n\})$ and $\Sg{Q}(\{
a_1,\ldots,a_m,\allowbreak w_1,\ldots,w_n\})$ are isomorphic over $\{
a_1,\ldots,a_m\}$. We obtain
$\mathbf{P}\models\varphi(\overrightarrow{r},\overrightarrow{a})$,
thus $\mathbf{P}\models(\exists \ora{x})\varphi(\ora{x},\ora{a})$.

To simplify notation we define $S=\{a_1,\ldots,a_m\}$ 
and $T=\{a_1,\allowbreak\ldots,\allowbreak a_m,v_1,\allowbreak\ldots,v_n\}$, where we may assume that  $S$ and $T$ are the carrier sets of subalgebras $\mathbf{S}$ and $\mathbf{T}$ of $\mathbf{P}$ and $\mathbf{Q}$, respectively (otherwise consider $\Sg{P}(S)$ and $\Sg{Q}(T)$ and add the equations $x_i=a_i$, $i=k+1,\ldots,m$, if 
$\Sg{P}(S)\setminus S=\{a_{k+1},\ldots,a_m\}$). By local finiteness of $\mathcal{PCSL}$ we may assume $\mathbf{S}$ and $\mathbf{T}$ to be finite. 

It remains to show that we can assume $\mathbf{S}$ and $\mathbf{T}$ to satisfy  \eqref{eq_apprex} and \eqref{eq_v1_vm}, respectively. We first consider $\mathbf{S}$. With Lemma \ref{lemma_fin_dir_prod} it follows that $\mathbf{S}$ can be extended in $\mathbf{P}$ to a subalgebra as in \eqref{eq_apprex}. 

Now we turn to $\mathbf{T}$. Using subdirect representation, $\mathbf{Q}=\widehat{\mathbf{B}}^I$ may be assumed for a suitable atomfree Boolean algebra $\mathbf{B}$ and a suitable index set $I$. $\widehat{\mathbf{B}}$ is algebraically closed by Theorem \ref{theorem_ac_char}, therefore $\mathbf{Q}$ as a direct product of algebraically closed factors  is algebraically closed according to \cite[Lemma 5]{Sc3}. According to Theorem \ref{theorem_finite ax_ac}, $\mathbf{Q}$ satisfies \ref{AC1}--\ref{AC4}. According to Lemma \ref{lemma_fin_dir_prod} the finite subalgebra $\mathbf{T}$ can be extended within  $\mathbf{Q}$ to a subalgebra as in \eqref{eq_v1_vm}.
\end{proof}
%
%
According to Lemma \ref{lemma_criterion_ec} a finite axiomatization of $\mathcal{PCSL}^{ec}$ can be obtained by adding to the axioms \ref{AC1}--\ref{AC4}, which axiomatize $\mathcal{PCSL}^{ac}$, finitely many axioms $\varphi_1,\ldots,\varphi_m$ such that $\mathbf{P}\models\bigwedge_{i=1}^m\varphi_i$ implies the existence of $\mathbf{S'}\leq\mathbf{P}$ with $\mathbf{S'}\cong_S\mathbf{T}$ whenever $\mathbf{S}$, $\mathbf{T}$ and $\mathbf{Q}$ satisfy \ref{lemma_criterion_ec_item1} and \ref{lemma_criterion_ec_item2} of Lemma \ref{lemma_criterion_ec}.

We are going to obtain such a finite axiomatization by carrying out the following steps.
\begin{enumerate}
\item\label{item_semantic} 
Given $\mathbf{S}$, $\mathbf{T}$ and $\mathbf{Q}$ satisfying \ref{lemma_criterion_ec_item1} and \ref{lemma_criterion_ec_item2} of Lemma \ref{lemma_criterion_ec} we show that there is a chain $\left(\mathbf{T}_i\right)_{0\leq i\leq n}$ of subalgebras of $\mathbf{Q}$ of type \eqref{eq_v1_vm} such that $\mathbf{T}_0=\mathbf{S}$, $\mathbf{T}_n=\mathbf{T}$ and $\mathbf{T}_i\leq\mathbf{T}_{i+1}$ for $i=0,\ldots,n-1$. 

Furthermore, for $0\leq i\leq n-1$ there is $b\in\Sk(\mathbf{T}_i)$ and $d\in\D(\mathbf{T}_i)$ such that $\mathbf{T}_{i+1}=\Sg{\mathbf{Q}}(T_{i}\cup\{b\})$ or $\mathbf{T}_i=\Sg{\mathbf{Q}}(T_{i}\cup\{d\})$, respectively.

The construction of such a chain is the contents of the Lemma \ref{lemma_stepwise_ext0}--Lemma \ref{lemma_stepwise_ext2}.  
\item\label{item_syntactic} 
We introduce the five axioms \ref{EC1}--\ref{EC5} such that if $\mathbf{P}$ sa\-tis\-fies these axioms there is a chain $\left(\mathbf{S}_i\right)_{0\leq i\leq n}$ of subalgebras of $\mathbf{P}$ such that $\mathbf{S}_i\cong_S\mathbf{T}_i$ for $i=1,\ldots,n$. 

The axioms guarantee that if the universally bound variables range over the elements of $S_i$ of the appropriate type there is an element $x\in P$ with $\Sg{\mathbf{P}}(S_{i}\cup\{x\})\cong_S\mathbf{T}_{i+1}$.

That \ref{EC1}--\ref{EC5} ensure these extension steps is the contents of the Lemmas \ref{lemma_spec_ext_case0}--\ref{lemma_spec_ext_case3}. 
\end{enumerate}
%
%
{\em Carrying out Step \ref{item_semantic}} 

In the following a direct product $\prod_{i=\ell}^k\widehat{\mathbf{B}_i}$ of subdirectly irreducible p-semilattices with $k<\ell$ is assumed to be the one-element p-semilattice.
\begin{lemma}\label{lemma_stepwise_ext0}
If $\mathbf{T}=\mathbf{2}^{r}\times\prod_{i=1}^q\widehat{\mathbf{F}_{g(i)}}$ with $1\leq g(i)$ for  $1\leq i\leq q$,
and $\mathbf{S}\leq \mathbf{T}$ such that
$\mathbf{S}\cong\mathbf{2}^{r}\times\prod_{i=1}^p\mathbf{F}_{g(i)}\times\prod_{i=p+1}^q\widehat{\mathbf{F}_{g(i)}}$, $p\leq q$, then there is a sequence of subalgebras
$\mathbf{T}_0,\ldots,\mathbf{T}_{p}$ of $\mathbf{T}$ satisfying
\begin{itemize}
\item $\mathbf{T}_0= \mathbf{S}$,
\item $\mathbf{T}_k\leq \mathbf{T}_{k+1}$ for $k=0,\ldots,p$,
\item $\mathbf{T}_k\cong \mathbf{2}^{r}\times\prod_{i=1}^{k}\widehat{\mathbf{F}_{g(i)}}\times\prod_{i=k+1}^p\mathbf{F}_{g(i)}\times\prod_{i=p+1}^q\widehat{\mathbf{F}_{g(i)}}$, $0\leq k\leq p$,
\item $\mathbf{T}_{p}= \mathbf{T}$.
\end{itemize}
\end{lemma}
\begin{proof}
To simplify notation we define $d_\ell=(1,\ldots,1\allowbreak,e,1,\ldots,1)$ where $e$ is at the $\ell$-th place, $1\leq \ell\leq r+q$. We put $\mathbf{T}_0=\mathbf{S}$ and
$\mathbf{T}_{k+1}:=\Sg{\mathbf{T}}(T_{k}\cup\{d_{r+k+1}\})$ for $0\leq k\leq p-1$. Obviously, the sequence $\left(\mathbf{T}_{k+1}\right)$ fulfils the claim of the lemma.
\end{proof}
\begin{lemma}\label{lemma_stepwise_ext1}
If $\mathbf{T}=\mathbf{2}^{r}\times\prod_{i=1}^q\widehat{\mathbf{F}_{f(i)}}$ with $1\leq f(i)$ for  $1\leq i\leq q$
and $\mathbf{S}\leq \mathbf{T}$ such that
$\mathbf{S}\cong\mathbf{2}^{r}\times\prod_{i=1}^p\widehat{\mathbf{F}_{f(i)}}$, $0\leq p<q$, then there is $g\colon\{p+1,\ldots,q\}\ra\N$ and a sequence of subalgebras
$\mathbf{T}_0,\ldots,\mathbf{T}_{2(q-p)}$ of $\mathbf{T}$ satisfying
\begin{itemize}
\item $\mathbf{T}_0= \mathbf{S}$,
\item $\mathbf{T}_k\leq \mathbf{T}_{k+1}$ for $k=0,\ldots,2(q-p)-1$,
\item $\mathbf{T}_k\cong \mathbf{2}^{r}\times\prod_{i=1}^{p} \widehat{\mathbf{F}_{f(i)}}\times\prod_{i=p+1}^{p+k} \widehat{\mathbf{F}_{g(i)}}$, $1\leq k\leq q-p$, $g(i)\leq f(i)$ for $p+1\leq i\leq q$,
\item $\mathbf{T}_{q-p+k}\cong \mathbf{2}^{r}\times\prod_{i=1}^{p+k}\widehat{\mathbf{F}_{f(i)}}\times\prod_{i=p+k+1}^q \widehat{\mathbf{F}_{g(i)}}$ for $1\leq k\leq q-p$,
\item $\mathbf{T}_{2(q-p)}= \mathbf{T}$.
\end{itemize}
\end{lemma}
\begin{proof}
As in the proof of Lemma \ref{lemma_stepwise_ext0} we first simplify notation.
We define $c_k=(0,\ldots,0,1,0,\ldots,0)$ where $1$is at the $k$-th place, $1\leq k\leq r+q$. Furthermore, we define $d_a\in\D(\mathbf{T})$ for $a\subseteq\{1,\ldots,r+q\}$ to be the dense element satisfying $\pi_i(d_a)=e$ if and only if $i\in a$. Analogously, $c_a\in\Sk(\mathbf{T})$ is defined to be the central element satisfying $\pi_i(c_a)=1$ if and only if $i\in a$.

Since $\mathbf{S}$ is the product of subdirectly irreducible factors there are $j_1,\ldots,j_p\in\{1,\ldots,r+q\}$ such that $\pi_{j_i}(S)=\pi_{j_i}(T)$ for $i=1,\ldots,r+p$. We may assume $\{j_1\ldots,j_p\}=\{r+1,\ldots,r+p\}$.

Now we look at $\pi_{r+i}(\mathbf{S})$ for $i=p+1,\ldots,q$. Due to the subdirect irreducibility of the factors of
$\mathbf{S}$ there is for $i\in\{p+1,\ldots,q\}$ an index $j\in\{1,\ldots,p\}$ such that $|\pi_i(S)|\leq|\pi_{j}(S)|$. In case of equality  
there is $j\in\{1,\ldots,p\}$ with $\pi_{r+i}(s)=\pi_{r+j}(s)$ for all $s\in S$ ---after renaming the atoms if necessary. For $1\leq j\leq p$ let $m_j\subset\{r+p+1,\ldots,r+q\}$ be the subset of these indices and $\ol{m_j}:=m_j\cup\{r+j\}$. By the definition of $m_j$ we have $\mathbf{S}/\theta_{c_{\ol{m_j}}}\cong\pi_{r+j}(\mathbf{S})$ for $1\leq j\leq p$. We have $1\leq|\ol{m_j}|$ and $m:=\sum_{j=1}^p |m_j|\leq q$. We may assume $m_j=\{r+p+\sum_{i=1}^{j-1}|m_i|+1,\ldots,r+p+\sum_{i=1}^{j}|m_i|\}$ without loss of generality.

If $|\pi_i(S)|<|\pi_{j}(S)|$ then $\pi_i(d)=1$ for
$d\in\D(\mathbf{S})$:  There are elements $a,b\in\Sk(\mathbf{S})$ such that $a_i^*=b_i$ but $a_j^*\not=b_j$. Then at least one of $a_j\w b_j^*>0$ and $a_j^*\w b_j>0$ holds, thus either $a\w b^*=(u_1,\ldots,u_{r+q})$ or $a^*\w b=(u_1,\ldots,u_{r+q})$ such that $u_j>0$ and $u_i=0$, implying $1=u_i^*\leq d_i$. Therefore we have that $p_{r+p+i}(d)=1$ for $d\in\D(\mathbf{S})$ and $r+p+m< i\leq r+q$. We define
\begin{equation}\label{stepwise_ext}
\mathbf{S}_l=
 \begin{cases}
 \pi_{r+p+l}(\mathbf{S}), & \text{if  $\pi_{r+p+l}(d)=e$ for some $d\in\D(\mathbf{S})$}; \\
 \widehat{\pi_{r+p+l}(\mathbf {S})}, & \text{if  $\pi_{r+p+l}(d)=1$ for all $d\in\D(\mathbf{S})$.}
 \end{cases}
\end{equation}
for $l=1,\ldots,q-p$. $g(i)$ used in the statement of the lemma is such that $\widehat{F_{g(i)}}\cong\mathbf{S}_{i-p}$.

We put $\mathbf{T}_0=\mathbf{S}$. Now, we are first going to extend $\mathbf{T}_0$ successively by splitting the maximal dense elements of $\mathbf{S}/\theta_{c_{\ol{m_j}}}$, $j=1,\ldots,p$, where necessary, that is where $1\leq |m_j|$ holds. $\mathbf{S}/\theta_{c_{\ol{m_j}}}$ then yields a factor isomorphic to $\pi_{r+j}(\mathbf{S})\times\prod_{i\in m_j}\mathbf{S}_{i-(r+p)}$.

Let us consider $j=1$. $d_{r+p+1}$ is maximal dense not only with repect to $\mathbf{S}/\theta_{c_{\ol{m_1}}}$ but to $\mathbf{T}$; $d_{\{r+p+2,\ldots,r+p+|m_1|\}}$ is its complement with respect to $\mathbf{S}/\theta_{c_{m_1}}$. Therefore,
\[
 \mathbf{T}_1:=\Sg{\mathbf{T}}(T_{0}\cup\{d_{r+p+1},d_{\{r+p+2,\ldots,r+p+|m_1|\}},c_{r+p+1}\}),
\]

\[
\mathbf{T}_2:=\Sg{\mathbf{T}}(T_{1}\cup\{d_{r+p+2},d_{\{r+p+3,\ldots,r+p+|m_1|\}},c_{r+p+2}\})
\]
and generally
%
\[
\mathbf{T}_{k+1}:=\Sg{\mathbf{T}}(T_{k}\cup\{d_{r+p+k+1},d_{\{r+p+k+2,\ldots,r+p+|m_1|\}},c_{r+p+k+1}\})
\]
for $0\leq k\leq |m_1|-1$. We have to show $\mathbf{T}_1\cong\mathbf{T}_{0}\times\mathbf{S}_{1}$, $\mathbf{T}_2\cong\mathbf{T}_{1}\times\mathbf{S}_{2}$ and in general
\begin{equation}\label{equation_homomorphic_extension}
 \mathbf{T}_{k+1}\cong\mathbf{T}_{k}\times\mathbf{S}_{k+1}.
\end{equation}

Considering $1\leq j\leq p$ arbitrary, $\sum_{i=1}^{j-1}|m_i|\leq k\leq\sum_{i=1}^{j}|m_i|-1$, we define
\begin{equation}\label{eq_extension_2}
	\mathbf{T}_{k+1}:= \Sg{\mathbf{T}}\left(T_{k}\cup\{d_{r+p+k+1},d_{\{r+p+k+2,\ldots,r+p+\sum_{i=1}^{j}|m_i|\}},c_{r+p+k+1}\}\right),
\end{equation}
having to show that \eqref{eq_extension_2} satisfies \eqref{equation_homomorphic_extension}.

Secondly, we consider $\ol{m}< k\leq q-p-1$, where we define
\begin{equation}\label{eq_extension_3}
	\mathbf{T}_{k+1}=\Sg{\mathbf{T}}\left(T_{k}\cup\{d_{r+k+1},c_{r+k+1}\}\right)
\end{equation}
and for which we again have to show \eqref{equation_homomorphic_extension}.

We define $\varphi_k:\mathbf{T}_{k}\ra\prod_{i=1}^{r+p+k}\pi_i(\mathbf{T})$ by $\varphi_k(x_1,\ldots,x_{r+q})=(x_1,\ldots,x_{r+p+k})$. Obviously, $\varphi_k$ is an homomorphism. We show inductively that $\varphi_k$ is injective and
\begin{equation}\label{eq_surjectivity}
\im(\varphi_k)=\prod_{i=1}^{r+p}\pi_i(S)\times\prod_{i=1}^{k}S_i.
\end{equation}
Injectivity holds since $x,y\in T_k$, $\pi_i(x)\not=\pi_i(y)$ for $i>k$ implies $\pi_i(x)\not=\pi_i(y)$ for an $i\leq k$ by the construction of $\mathbf{T}_k$. Equation \eqref{eq_surjectivity} holds for $k=0$ as $\mathbf{T}_0=\mathbf{S}$ and $\mathbf{S}\cong\prod_{i=1}^{r+p}\pi_i(\mathbf{S})$. For the induction step we consider the two cases of the definition of $\mathbf{T}_k$.
\begin{enumerate}
\item\label{first_case}
Validity of \eqref{eq_surjectivity} for \eqref{eq_extension_2}: By \eqref{eq_extension_2} and the induction hypothesis we have
\begin{equation}\label{eq_sur11}
\varphi_{k+1}(\{c_{r+p+k+1}^*\w x\colon x\in T_{k+1}\})=\prod_{i=1}^{r+p}\pi_i(S)\times\prod_{i=1}^{k}S_i\times\{0\}.
\end{equation}
By \eqref{eq_extension_2} we have
\begin{equation}\label{eq_sur12}
\varphi_{k+1}(\{c_{r+p+k+1}\w x\colon x\in T_{k+1}\})=\underbrace{(0,\ldots,0)}_{r+p+k\text{ places}}\times S_{k+1}.
\end{equation}
From \eqref{eq_sur11} and \eqref{eq_sur12} and the construction of $\mathbf{T}_{k+1}$ in \eqref{eq_extension_2} follows the claim.
\item Analogous to the first case. 
\end{enumerate}
After $q-p$ steps we obtain the subalgebra $\mathbf{T}_{q-p}$, which is isomorphic
to $\mathbf{S}\times\prod_{l=1}^{q-p} \mathbf{S}_l$. If $|S_1|<|\widehat{F_{f(p+1)}}|$, there is
$b\in\Sk(\mathbf{T}_{q-p})$ such that $b<d_{r+p+1}$ and $b$ an anti-atom
of $\Sk(\mathbf{T}_{q-p})$ but no anti-atom of $\Sk(\mathbf{T})$. There is a skeletal
element $\bar{b}$ with $b<\bar{b}<d_{r+p+1}$ and $\left.b\bsup \bar{b}\right.^*
<d_{r+p+1}$. Setting $\mathbf{T}_{q-p,1}=\Sg{T}(
  T_{q-p}\cup\{\bar{b}\})$ we obtain
\begin{equation}\label{eq_my_contr}
    \mathbf{T}_{q-p,1}= \set{((\bar{b}\wedge s)\bsup(\left.\bar{b}\right.^*\w t))\w d: s,t\in \Sk(T_{q-p}),d\in\D(\mathbf{T}_{q-p})}
\end{equation}
using conjunctive normal form for Boolean terms, \eqref{equation_dense_star} and $\D(\Sg{T}(
  T_{q-p}\cup\{\bar{b}\}))=\D(\mathbf{T}_{q-p})$.
The right hand side of \eqref{eq_my_contr} is isomorphic to $\mathbf{S}\times\widehat{\mathbf{F}_{r_1+1}}\times\prod_{l=2}^{q-p}\mathbf{S}_l$ if $r_1\in\N$ is such that $\mathbf{S}_1\cong\widehat{\mathbf{F}_{r_1}}$. Repeating this procedure for $\mathbf{T}_{q,n}$ as long as $r_1+n<f(p+1)$
yields a subalgebra $\mathbf{T}_{q+1}$ of $\mathbf{T}$ isomorphic to
$\mathbf{S}\times\widehat{\mathbf{F}_{f(p+1)}}\times\prod_{l=2}^{q-p} \mathbf{S}_l$. Applying this procedure
to the factors $\mathbf{S}_l$ for $l=2,\ldots,q-p$ finally finishes the proof.
\end{proof}
\begin{lemma}\label{lemma_stepwise_ext2}
If $\mathbf{T}=\mathbf{2}^{r'}\times\prod_{i=1}^q\widehat{\mathbf{F}_{f(i)}}$ with
$r',q,f(i)\in\N\setminus\{0\},1\leq i\leq q$, and $\mathbf{S}\leq\mathbf{T}$ such that
$\mathbf{S}\cong\mathbf{2}^{r}\times\prod_{i=1}^q\widehat{\mathbf{F}_{f(i)}}$, $0\leq r<r'$, then there is a
sequence of subalgebras $\mathbf{T}_0,\ldots, \mathbf{T}_{r'-r}$ of $\mathbf{T}$ with the following
properties:
\begin{itemize}
\item $\mathbf{T}_0=\mathbf{S}$,
  \item $\mathbf{T}_k\leq\mathbf{T}_{k+1}$ for $k=0,\ldots,r'-r-1$,
  \item  $\mathbf{T}_k\cong \mathbf{2}^{r+k}\times \prod_{i=1}^q\widehat{\mathbf{F}_{f(i)}}$ for $k=0,\ldots,r'-r$.
\end{itemize}
\end{lemma}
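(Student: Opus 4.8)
The plan is to induct on $p$, peeling one factor $\mathbf{2}$ off the top of the product at each step. For $p=0$ nothing is to be done: $\mathbf{S}\leq\mathbf{T}=\prod_{i=1}^{q}\widehat{\mathbf{F}_{f(i)}}$ together with $|\mathbf{S}|=|\mathbf{T}|$ force $\mathbf{S}=\mathbf{T}$, so the one-term sequence $\mathbf{T}_{0}=\mathbf{S}$ works. For $p\geq1$ write $\mathbf{T}=\mathbf{2}^{p}\times\mathbf{R}$ with $\mathbf{R}=\prod_{i=1}^{q}\widehat{\mathbf{F}_{f(i)}}$, and let $\pi_{R}\colon\mathbf{T}\to\mathbf{R}$, $\pi_{B}\colon\mathbf{T}\to\mathbf{2}^{p}$ be the projections. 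The decisive point will be that $\pi_{R}|_{\mathbf{S}}$ is injective. Granting this, $\pi_{R}|_{\mathbf{S}}$ is by cardinality an isomorphism, so $\mathbf{S}=\{(\alpha(r),r)\colon r\in\mathbf{R}\}$ is the graph of a homomorphism $\alpha=(\alpha_{1},\alpha')\colon\mathbf{R}\to\mathbf{2}\times\mathbf{2}^{p-1}$. Then, putting
\[
  \mathbf{T}':=\{\,(\alpha_{1}(\pi_{R}(t)),t')\colon t=(\,\cdot\,,t')\in\mathbf{2}\times(\mathbf{2}^{p-1}\times\mathbf{R})\,\},
\]
one obtains a subalgebra of $\mathbf{T}$ with $\mathbf{S}\leq\mathbf{T}'\leq\mathbf{T}$ (it is the graph of the homomorphism $\alpha_{1}\circ\pi_{R}$, and contains $\mathbf{S}$ because each $(\alpha_{1}(r),\alpha'(r),r)$ is the point of the graph over $t'=(\alpha'(r),r)$), isomorphic via $t\mapsto t'$ to $\mathbf{2}^{p-1}\times\mathbf{R}$, under which isomorphism $\mathbf{S}$ corresponds to the copy $\{(\alpha'(r),r)\colon r\in\mathbf{R}\}$ of $\mathbf{R}$. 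Applying the induction hypothesis inside $\mathbf{T}'$ yields $\mathbf{S}=\mathbf{T}_{0}\leq\dots\leq\mathbf{T}_{p-1}=\mathbf{T}'$ with $\mathbf{T}_{k}\cong\mathbf{2}^{k}\times\mathbf{R}$, and appending $\mathbf{T}_{p}:=\mathbf{T}$ — which satisfies $\mathbf{T}_{p-1}=\mathbf{T}'\leq\mathbf{T}$ and $\mathbf{T}_{p}\cong\mathbf{2}^{p}\times\mathbf{R}$ — completes the induction.

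It remains to prove injectivity of $\pi_{R}|_{\mathbf{S}}$, which I first reduce to the skeleton. If $u\leq v$ in $\mathbf{S}$ with $\pi_{R}(u)=\pi_{R}(v)$, then $u^{*},v^{*}\in\Sk(\mathbf{S})$ and $\pi_{R}(u^{*})=\pi_{R}(v^{*})$; once we know $\pi_{R}$ is injective on $\Sk(\mathbf{S})$ this gives $u^{*}=v^{*}$, and then, reading $u,v$ in the coordinates of $\mathbf{T}$, they have equal $\mathbf{R}$-coordinates and (since $^{*}$ is the complementation of the Boolean algebra $\mathbf{2}^{p}$, which is one-to-one) equal $\mathbf{2}$-coordinates, so $u=v$; the general case follows by first replacing $u,v$ with $u\wedge v$. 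Moreover, injectivity of $\pi_{R}$ on $\Sk(\mathbf{S})$ is equivalent to: $\mathbf{S}$ contains no skeletal element $k\neq0$ with $\pi_{R}(k)=0$. Indeed, from two skeletal elements $k\neq k'$ with $\pi_{R}(k)=\pi_{R}(k')$ one gets that $k\wedge k'^{*}$ and $k'\wedge k^{*}$ are skeletal with $\pi_{R}$-image $0$, and at least one of them is nonzero.

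So the whole argument hinges on ruling out a nonzero skeletal $k\in\mathbf{S}$ with $\pi_{R}(k)=0$, and this I expect to be the main obstacle. The idea: such a $k$ lies below $c:=(1,\dots,1,0,\dots,0)$ (ones on the $\mathbf{2}$-coordinates), with $k^{*}\geq c^{*}$, and a short direct check then shows $k$ is a \emph{central} element of $\mathbf{S}$ (if $s\in S$ satisfies $s\geq k$ and $s\geq k^{*}\geq c^{*}$ then $s$ is forced to be $1$). Hence $\mathbf{S}$, and therefore $\mathbf{R}\cong\mathbf{S}$, splits as a nontrivial direct product $\mathbf{S}/\theta_{k}\times\mathbf{S}/\theta_{k^{*}}$, with $\pi_{R}(\mathbf{S})$ a homomorphic image of $\mathbf{S}/\theta_{k^{*}}$. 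But each $\widehat{\mathbf{F}_{f(i)}}$ is directly indecomposable and its monolith identifies its maximal proper dense element with $1$, so such a splitting must cut through one of the factors $\widehat{\mathbf{F}_{f(i)}}$ and force the quotient $\pi_{R}(\mathbf{S})$ to collapse that factor's maximal proper dense element with $1$. This contradicts the fact that $\pi_{R}$ is faithful on the dense elements: every dense element of $\mathbf{S}\leq\mathbf{T}$ carries $1$ in each $\mathbf{2}$-coordinate, so $\pi_{R}$ maps $\D(\mathbf{S})$ bijectively onto $\D(\mathbf{R})$, keeping each maximal proper dense element of a factor strictly below $1$. Making this last step fully rigorous — i.e. pinning down exactly which direct decompositions $\mathbf{R}$ admits — is the delicate part; the rest is bookkeeping.
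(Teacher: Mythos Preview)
Your approach works and is genuinely different from the paper's. The paper simply observes that every $x\in S\setminus\{1\}$ lies below a proper dense element of $\mathbf{S}$ (this being a feature of $\mathbf{R}\cong\mathbf{S}$), and since proper dense elements of $\mathbf{T}$ have at least one $\mathbf{R}$-coordinate equal to $e$, no element of $S\setminus\{1\}$ can have $\pi_R$-image $1$. It then adjoins the explicit elements $b_k=(1^k,0^{p-k},1,\ldots,1)\in T\setminus S$ one at a time, setting $\mathbf{T}_0=\mathbf{S}$ and $\mathbf{T}_{k+1}=\Sg{\mathbf{T}}(T_k\cup\{b_{k}\})$, and leaves the verification of the isomorphism types implicit. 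Your graph-of-a-homomorphism description actually explains \emph{why} this works---it identifies the structure of $\mathbf{S}$ inside $\mathbf{T}$---and makes the induction transparent; the paper's route is shorter but opaque.

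That said, your argument for injectivity of $\pi_R$ on $\Sk(\mathbf{S})$ is massively over-engineered. The dense-element observation you make at the end already gives you that $\pi_R(x)=1$ forces $x=1$ for every $x\in S$ (if $x\neq 1$, pick a proper dense $d\geq x$ in $\mathbf{S}$; then $\pi_R(d)=1$, but $\pi_B(d)=1$ too, so $d=1$). Now if $k\in\Sk(\mathbf{S})$ and $\pi_R(k)=0$, then $\pi_R(k^*)=1$, hence $k^*=1$, hence $k=k^{**}=0$. That is the entire argument---no central elements, no direct-decomposition theory, nothing delicate. Combined with your (correct) reduction from full injectivity to the skeleton, the whole thing is two lines.

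If you nonetheless want to keep the central-element route, the step you flag as delicate is not: the central elements of $\mathbf{R}=\prod_i\widehat{\mathbf{F}_{f(i)}}$ are exactly the $\{0,1\}$-tuples, because in each factor $\widehat{\mathbf{F}_{f(i)}}$ any skeletal element other than $0,1$ lies below $e$, as does its complement. So your $k$, transported to $\mathbf{R}$ via $\mathbf{S}\cong\mathbf{R}$, is a nontrivial $\{0,1\}$-tuple, and $\mathbf{S}/\theta_{k^*}$ is the subproduct over the coordinates where $k$ vanishes---a proper subproduct with strictly fewer than $2^q$ dense elements, giving the contradiction you sketch. The phrase ``cut through one of the factors'' is misleading; the splitting respects the factors, it just drops some of them.
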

\begin{proof}
The subalgebra $\mathbf{T}_{k+1}$ can obtained from $\mathbf{T}_k$ by splitting an atom of $\mathbf{T}_k$ that is not an atom of $\mathbf{T}$.
\end{proof}
%
%
{\em Carrying out Step \ref{item_syntactic}} 

We begin with the introduction of the axioms \ref{EC1}--\ref{EC5}.
\begin{definition}\label{def_finax_ec}
Let $\mathbf{P}$ be a p-semilattice. $\mathbf{P}$ will be said to satisfy
%
\setenumerate{label=(EC\theenumi)}
\begin{enumerate}
\item\label{EC1} if
\begin{equation*}(\forall b_1,b_2)(b_1<b_2\lra (\exists y)(b_1<y<b_2)),
\end{equation*}
%
\item\label{EC2} if
%
\begin{multline*}(\forall b_1,b_2,d)(b_1\leq b_2<d\And b_1^*\parallel d\lra\\
(\exists
y)(b_2<y<1\And b_1^*\w y\parallel d\And
b_1 \bsup\, y^*<d)),
\end{multline*}
\item\label{EC3} if
\begin{equation*}(\forall b)(b<1\lra (\exists z)(b<z\And z\not> b^*)). 
\end{equation*}
\item\label{EC4} if
\begin{equation*}(\forall d_1,d_2)(d_1<d_2\lra (\exists z)(d_1<z<d_2)),
\end{equation*}

\item\label{EC5} if
\[(\forall b,d)(0<b<d
\lra
(\exists
z)(z<d\And  b\parallel z\And d\w b^*=z\w b^*)),
\]
\end{enumerate}
\end{definition}

A couple of sentences to explain what the axioms \ref{EC1}--{\ref{EC5} mean are appro\-priate. Axioms \ref{EC1}  and \ref{EC4} are the usual density conditions holding in existentially
closed posets for skeletal and dense elements. Skeletal and dense elements must be mentioned separately because $b_1<y<b_2$ with $b_1$ and $b_2$ skeletal does not imply that $y$ is skeletal as well. 

In a p-semilattice $\mathbf{P}$ satisfying \ref{EC3} a finite subalgebra $\mathbf{S}\cong
\mathbf{2}^p$ with $1\leq p$ can be extended to a subalgebra $\mathbf{S'}$ isomorphic to
$\mathbf{T}$ over $S$ for any subalgebra $\mathbf{T}\cong
\mathbf{2}^p\times\widehat{\mathbf{F}_n}$ of an extension $\mathbf{Q}$ of $\mathbf{P}$ that is an extension of $\mathbf{S}$ with
$1\leq n$.

To understand \ref{EC2} and \ref{EC5} diagrams may be helpful.\\

\begin{picture}(310,65)
\put(0,0){$b_1$}
\put(10,0){$\circ$}
\put(0,30){$b_2$}
\put(10,30){$\circ$}
\put(0,60){$d$}
\put(10,60){$\circ$}
\put(12.5,4.2){\line(0,1){26.6}}
\put(12.5,60.8){\line(0,-1){26.4}}

\put(50,60){$\circ$} \put(60,60){$b_1^*$} \put(25,40){$d\parallel
b_1^*$} \put(225,15){$d\parallel b_1^*\w y$}

\put(100,30){$\stackrel{\ref{EC2}}{\Longrightarrow}$}

\put(150,0){$y^*$}\put(160,0){$\circ$} \put(180,30){$\circ$}
\put(150,30){$b_2\bsup y^*$}
\put(180,60){$\circ$}\put(182.5,34.4){\line(0,1){26.4}}
\put(184,31.5){\line(2,-3){18.4}} \put(201,0){$\circ$}
\put(204.9,3.7){\line(2,3){38.3}} \put(214,0){$b_2$}
\put(170,60){$d$}\put(181,31.3){\line(-2,-3){17.9}}

\put(263.7,30){$\circ$} \put(275,30){$b_1^*\w y$}
\put(242,60){$\circ$} \put(286,60){$\circ$}
\put(295,60){$b_1^*$}
\put(227,60){$y$}\put(268.1,32.9){\line(2,3){19}}\put(264.4,33){\line(-2,3){18.7}}

\end{picture}\\

In a p-semilattice $\mathbf{P}$ satisfying \ref{EC2} a finite subalgebra $\mathbf{S}\cong \prod_{i=1}^q\widehat{\mathbf{F}_{f(i)}}$ with $1\leq f(i)$ for $1\leq i\leq q$ can be extended to a subalgebra $\mathbf{S'}$ isomorphic to $\mathbf{T}$ over $S$ for any subalgebra $\mathbf{T}\cong \mathbf{2}\times\prod_{i=1}^q\widehat{\mathbf{F}_{f(i)}}$ of an extension $\mathbf{Q}$ of $\mathbf{P}$ that is an extension of $\mathbf{S}$. Applying \ref{EC2} to suitable $d,b_1,b_2\in S$ yields a skeletal element $y$ that behaves with respect to $\mathbf{S}$ as the element $(0,1,\ldots,1)\in T\setminus S$.\\

\begin{picture}(310,65)
\put(0,0){$0$}\put(10,30){$\circ$}
\put(10,0){$\circ$}\put(0,30){$b$} \put(10,0){$\circ$}
\put(0,60){$d$} \put(10,60){$\circ$} \put(12.5,4.2){\line(0,1){26.4}}
\put(12.5,60.8){\line(0,-1){26.2}}

\put(110,30){$\stackrel{\ref{EC5}}{\Longrightarrow}$}

\put(180,0){$0$}\put(190,30){$\circ$}
\put(190,0){$\circ$}\put(180,30){$b$} \put(190,0){$\circ$}
\put(180,60){$d$} \put(190,60){$\circ$}
\put(220.7,33.3){\line(-1,1){27.4}}\put(223.4,30.6){\line(1,-1){10.3}}
\put(194.1,3.2){\line(5,2){39.2}} 

\put(260,28){$\circ$} \put(230,30){$z$} \put(219.5,29.5){$\circ$}
 \put(270,30){$b^*$}
 \put(192.5,4.2){\line(0,1){26.6}} \put(192.5,60.8){\line(0,-1){26.4}}
\put(232.5,16.7){$\circ$}\put(230,8){$d\w b^*=z\w b^*$}
\put(236.5,20.2){\line(5,2){24.3}}
\end{picture}\\

In a p-semilattice $\mathbf{P}$ satisfying \ref{EC5} a finite subalgebra $\mathbf{S}\cong
\mathbf{2}^p\times\prod_{i=1}^q\widehat{\mathbf{F}_{f(i)}}$ with $0\leq p$, $1\leq q$, $1\leq f(i)$
 can be extended to a subalgebra $\mathbf{S'}$ isomorphic to
$\mathbf{T}$ over $S$ for any subalgebra $\mathbf{T}\cong
\mathbf{2}^p\times\prod_{i=1}^{q+1}\widehat{\mathbf{F}_{f(i)}}$ of an extension $\mathbf{Q}$ of $\mathbf{P}$ that is an extension of $\mathbf{S}$ with
$1\leq f(q+1)$ and $\min(\D(\mathbf{T}))<\min(\D(\mathbf{S}))$. Applying \ref{EC5} to suitable $d,b\in S$ yields a dense element $z$ that behaves with respect to $\mathbf{S}$ as the element $(e,\ldots,e)\in T\setminus S$.

\begin{remark}\label{remark_ec_ba}
\begin{enumerate}
  \item Observe in \ref{EC4} that $d^*=0\And d<d'$ implies $d'^*=0$.
  \item Let $\mathbf{P}$ be a p-semilattice satisfying \ref{EC1}. Then the subalgebra $\Sk(\mathbf{P})$ is atomfree and thus existentially closed in $\Sk(\mathbf{Q})$ for any p-semilattice $\mathbf{Q}$ extending $\mathbf{P}$.
\end{enumerate}
\end{remark}
%
%
We first show the necessity of the axioms \ref{AC1}--\ref{AC4} and \ref{EC1}--\ref{EC5} for a p-semilattice to be existentially closed.

\begin{theorem}\label{theorem_necessity}
If a p-semilattice $\mathbf{P}$ is existentially closed, then it satisfies \ref{AC1}--\ref{AC4} and \ref{EC1}--\ref{EC5}.
\end{theorem}

\begin{proof} 
We consider an arbitrary existentially closed p-semilattice $\mathbf{P}$ and show that it satisfies axioms \ref{AC1}--\ref{AC4} as well as axioms \ref{EC1}--\ref{EC5}. 

That $\mathbf{P}$ satisfies axioms \ref{AC1}--\ref{AC4} follows from Theorem
\ref{theorem_finite ax_ac} because every existentially closed
p-semilattice is algebraically closed.

To prove the necessity of the axioms \ref{EC1}--\ref{EC5} we replace in these five axioms the universally bound variables by arbitrary elements of $P$ of the appropriate type. We have to show the satisfiability in $\mathbf{P}$ of the resulting $\exists$-sentences of $\mathcal{L}(P)$.
\begin{itemize}
\item $\varphi_1(b_1,b_2)$: $(\exists x)(\Sk(x)\And  b_1<x< b_2)$ with
$\Sk(b_1),\Sk(b_2)$ and $b_1<b_2$
\item $\varphi_2(b_1,b_2,d)$: $(\exists x)(\Sk(x)\And  b_2<x<1\And b_1^*\w x\parallel d\And  b_2\bsup x^*<d)$ with $\Sk(b_1),\Sk(b_2),\D(d)$, $b_1^*\parallel d$  and $b_1\leq b_2<d<1$
\item $\varphi_3(b)$:  $(\exists x)(\D(x)\And  b<x<1\And x\not> b^*)$ with $\Sk(b)$, $b<1$ 
\item $\varphi_4(d_1,d_2)$:  $(\exists x)(d_1<x<d_2)$ with
$\D(d_1),\D(d_2)$ and $d_1<d_2$
\item $\varphi_5(b,d)$: $(\exists x)(\D(x)\And  x< d\And x\parallel b\And x\w b^*=d\w b^*)$ with $\mathsf{D}(d)\And \Sk(b)\And \allowbreak 0<b<d$
\end{itemize}

To obtain the satisfiability of the sentences $\varphi_1$ and $\varphi_4$ we use that $\mathbf{P}$ as a subdirect product of subdirectly irreducible p-semilat\-ti\-ces  can be embedded in some direct product $\mathbf{Q}$ of subdirectly irreducible p-semilattices.  With suitably many factors $\mathbf{2}$ and $\widehat{\mathbf{\mathbf{B}}_i}$ each of these sentences can be satisfied in a suitable $\mathbf{Q}$. Thus they can also be satisfied in $\mathbf{P}$ if $\mathbf{P}$ is existentially closed.

For $\varphi_2$ let $U$ be an ultrafilter of $\Sk(\mathbf{P})$ not containing $b_2$. Such an $U$ exists since $b_2<1$.   We define $f\colon\mathbf{P}\to\mathbf{P}\times\mathbf{2}$ by
\begin{equation}\label{eq_nec_ec3}
f(x)=\begin{cases} (x,1) & x^{**}\in U,\\
(x,0) & \text{otherwise}.\end{cases}
\end{equation}
As $\mathbf{P}$ satisfies \ref{AC1} there is for every $x\in P$ a dense element $d_x\in P$ such that $x=d_x\w x^{**}$ as is shown in \cite[Lemma 4.5]{RAS}. Therefore, $f$ is a homomorphism. We have $f(b_1)=(b_1,0)$, $f(b_2)=(b_2,0)$, $f(d)=(d,1)$. The extension $\mathbf{P}\times\mathbf{2}$ contains $d=(1,0)$ satisfying $\varphi_2$. 

For $\varphi_3$ let $U$ be an ultrafilter of $\Sk(\mathbf{P})$ not containing $b$.  We define $f\colon\mathbf{P}\to\mathbf{P}\times\mathbf{3}$ as in \eqref{eq_nec_ec3}.
We have $f(b)=(b,0)$. The extension $\mathbf{P}\times\mathbf{3}$ contains $d=(1,e)$ satisfying $\varphi_3$. 

For $\varphi_5$ consider an ultrafilter $U$ containing $b$ and define $f$ as in \eqref{eq_nec_ec3}. The extension $\mathbf{P}\times\mathbf{3}$ contains $d=(1,e)$ satisfying $\varphi_5$.
\end{proof}


%
The following lemmas can, as mentioned earlier, be considered the syntactic counterparts of Lemmas \ref{lemma_stepwise_ext0}, \ref{lemma_stepwise_ext1} and \ref{lemma_stepwise_ext2} (Lemma \ref{lemma_stepwise_ext2} corresponding to the two lemmas Lemma \ref{lemma_spec_ext_case1} and Lemma \ref{lemma_spec_ext_case2}).  Lemma \ref{lemma_spec_ext_case1} states that if $\mathbf{S}$ is a finite subdirectly irreducible subalgebra of a p-semilattice $\mathbf{P}$ that satisfies \ref{AC1}--\ref{AC4} and \ref{EC1}--\ref{EC5}, then $\mathbf{P}$ contains a sequence $\mathbf{S}_i$, $i=0,\ldots,q$, of subalgebras satisfying $\mbf{S}_i\cong\mbf{T}_i$ for $i=0,\ldots,q$ with $\mbf{T}_0,\ldots,\mbf{T}_q$ as in Lemma \ref{lemma_stepwise_ext1}. Lemma \ref{lemma_spec_ext_case2} is the corresponding statement for the sequence $\mbf{T}_{q+1},\ldots,\mbf{T}_{2q}$ of Lemma \ref{lemma_stepwise_ext1}, whereas Lemma \ref{lemma_spec_ext_case3} is the corresponding statement for the sequence $\mbf{T}_{0},\ldots,\mbf{T}_{p}$ of Lemma \ref{lemma_stepwise_ext2}.
\begin{lemma}\label{lemma_spec_ext_case0}
Let $\mathbf{P}$ and $\mathbf{Q}$ be p-semilattices with $\mathbf{Q}$ being an extension of $\mathbf{P}$, let $\mathbf{S}\cong\mathbf{2}^r\times\prod_{i=1}^p \mathbf{F}_{g(i)}\times\prod_{i=p+1}^q\widehat{\mathbf{F}_{g(i)}}$ be a finite subalgebra of $\mathbf{P}$ with $0<p<q$, $g(i)\geq 1$ for $1\leq i\leq q$.
Furthermore, we assume that $\mathbf{T}\cong\mathbf{2}^r\times\widehat{\mathbf{F}_{g(1)}}\times\prod_{i=2}^p \mathbf{F}_{g(i)}\times\prod_{i=p+1}^q\widehat{\mathbf{F}_{g(i)}}$ is a finite subalgebra of $\mathbf{Q}$ that is an extension of $\mathbf{S}$. If $\mathbf{P}$ satisfies \ref{AC1}--\ref{AC4} and
\ref{EC1}--\ref{EC5}, then there is an
  extension $\mathbf{S'}$ of $\mathbf{S}$ in $\mathbf{P}$ satisfying $\mathbf{S'}\cong_S\mathbf{T}$.
\end{lemma}

\begin{proof}
Since $\mathbf{T}\cong\mathbf{2}^r\times\widehat{\mathbf{F}_{g(1)}}\times\prod_{i=2}^p \mathbf{F}_{g(i)}\times\prod_{i=p+1}^q\widehat{\mathbf{F}_{g(i)}}$ we may assume
$\mathbf{T}=\mathbf{2}^r\times\widehat{\mathbf{F}_{g(1)}}\times\prod_{i=2}^p \mathbf{F}_{g(i)}\times\prod_{i=p+1}^q\widehat{\mathbf{F}_{g(i)}}$ identifying the subalgebra
$\mathbf{T}$ of $\mathbf{Q}$ with the direct product $\mathbf{T}$ is isomorphic to. There is a maximal dense element $d$ in $T\setminus S$ and a maximal central element $c$ with $c<d$.
We can assume $d=d_{r+1}$ and $c=c_{r+1}^*$ using the notation of the proofs of Lemma \ref{lemma_stepwise_ext0} and Lemma \ref{lemma_stepwise_ext1}. We have $c\in S$ as $\Sk(\mathbf{T})=\Sk(\mathbf{S})$.

We then have property (0) $T=S\cup\{d\w s\colon s\in S\text{ and }s\not\leq d\}=S\cup\{d\w s\colon s\in S\text{ and }\pi_{r+1}(s)=1\}=S\cup\{d\w s\colon s\in S\text{ and }c^*\leq s\}$. Now, for $s_1,s_2\in S$ the properties (1) $c^*\leq s_2\ra s_1\not=d\w s_2$ and (2) $c^*\leq s_1\w s_2,s_1\not=s_2\ra d\w s_1\not=d\w s_2$ obviously hold.

Applying \ref{EC3} yields a dense element $\tilde{d}\in P$ for which $c<\tilde{d}$ and $ \tilde{d}\not\geq c^*$ holds. For $\mathbf{S'}:=\Sg{\mathbf{P}}(S\cup\{\tilde{d}\})$ we have $\mathbf{S'}\cong_S\mathbf{T}$. This is obtained by showing that the homomorphism $f\colon T\to S'$ defined by $f(s)$ for $s\in S$ and $f(d)=\tilde{d}$ is an isomorphism. The map $f$ is surjective due to the above property (0) and the corresponding $S'=S\cup\{\tilde{d}\w s\colon s\in S\text{ and }s\not\leq \tilde{d}\}=S\cup\{\tilde{d}\w s\colon s\in S\text{ and }\pi_{r+1}(s)=1\}=S\cup\{\tilde{d}\w s\colon s\in S\text{ and }c^*\leq s\}$. It is injective due to the above properties (1) and (2) and the corresponding properties (1') $s_1,s_2\in S$, $c^*\leq s_2\ra s_1\not=\tilde{d}\w s_2$, (2') $s_1,s_2\in S$, $c^*\leq s_1\w s_2,s_1\not=s_2\ra \tilde{d}\w s_1\not=\tilde{d}\w s_2$ for $\mathbf{S'}$.

(1') and (2') are obtained as follows: We first deal with (1'). We show that the assumption $c^*\leq s_2$ together with $s_1=\tilde{d}\w s_2$ leads to a contradiction. We obtain the sequence of implications $s_1=\tilde{d}\w s_2\Longrightarrow c^*\w s_1=c^*\w\tilde{d}\Longrightarrow (c^*\w s_1)^{**}=(c^*\w\tilde{d})^{**}\Longrightarrow c^*\w s_1^{**}=c^*\Longrightarrow c^*\leq s_1^{**}$. The last inequality implies $\pi_{r+1}(s_1^{**})=1$, which implies $\pi_{r+1}(s_1)=1$ as there is no element $x\in S$ with $\pi_{r+1}(x)<(\pi_{r+1}(x))^{**}$. Thus $c^*\leq s_1$ implying $c^*\w s_1=c^*$. Due to $\tilde{d}\not> c^*$ we have $c^*\w(\tilde{d}\w s_2)<c^*$. Thus $s_1\not=\tilde{d}\w s_2$ contradicting $s_1=\widetilde{d}\w s_2$. For (2') we first note that the assumption $c^*\leq s_1\w s_2$ means $\pi_{r+1}(s_1)=\pi_{r+1}(s_2)=1$, which is the same as $c^*\w s_1=c^*\w s_2$. This yields $c\w s_1\not=c\w s_2$, which implies $\tilde{d}\w s_1\not=\tilde{d}\w s_2$ as $\tilde{d}>c$.
\end{proof}
\begin{lemma}\label{lemma_spec_ext_case1}
Let $\mathbf{P}$ and $\mathbf{Q}$ be p-semilattices with $\mathbf{Q}$ being an extension of $\mathbf{P}$, let $\mathbf{S}\cong\mathbf{2}^p\times\prod_{i=1}^q\widehat{\mathbf{F}_{f(i)}}$ be a finite subalgebra of $\mathbf{P}$ with $p\geq 0$, $f(i)\geq 1$ for $1\leq i\leq q$.
Furthermore, we assume that $\mathbf{T}\cong\mathbf{2}^p\times\prod_{i=1}^{q+1}\widehat{\mathbf{F}_{f(i)}}$ with $f(q+1)\geq 1$ 
is a finite subalgebra of $\mathbf{Q}$ that is an extension of $\mathbf{S}$. If $\mathbf{P}$ satisfies \ref{AC1}--\ref{AC4} and
\ref{EC1}--\ref{EC5}, then there is an
  extension $\mathbf{S'}$ of $\mathbf{S}$ in $\mathbf{P}$ satisfying $\mathbf{S'}\cong_S\mathbf{T}$.
\end{lemma}

\begin{proof}
Again, since $\mathbf{T}\cong\mathbf{2}^p\times\prod_{i=1}^{q+1}\widehat{\mathbf{F}_{f(i)}}$ we may assume
$\mathbf{T}=\mathbf{2}^p\times\prod_{i=1}^{q+1}\widehat{\mathbf{F}_{f(i)}}$ identifying the subalgebra
$\mathbf{T}$ of $\mathbf{Q}$ with the direct product $\mathbf{T}$ is isomorphic to.  To simplify notation we define $\ora{x}=(x_1,\ldots,x_{p+q})$ for $x\in
T$, $\overrightarrow{x}\leq \overrightarrow{y}$ if $x,y\in T$ and
$x_i\leq y_i$ for $1\leq i\leq p+q$, and
$\overrightarrow{x}<\overrightarrow{y}$ if $\overrightarrow{x}\leq
\overrightarrow{y}$ and $x_k<y_k$ for a $k\in\{1,\ldots,p+q\}$.
Furthermore, we set
$\overrightarrow{U}=\set{\overrightarrow{x}: x\in U}$ if $U$ is a
subset of $T$.

Again, since $\mathbf{S}$ is isomorphic to the direct product of the
subdirectly irreducible factors $\mathbf{2}$ and $\widehat{\mathbf{F}_{f(i)}}$ for
$i=1,\ldots,p+q$, and since
$\mathbf{T}=\mathbf{2}^p\times\prod_{i=1}^{q+1}\widehat{\mathbf{F}_{f(i)}}$ is an
extension of $\mathbf{S}$ we have ---changing the enumeration if
necessary--- $\ora{S}=\ora{T}$, which implies $\pi_i(S)=\pi_i(T)$ for
$i=1,\ldots,p+q$. 
We set $d_0=\min(\D(\mathbf{T}))=(1,\ldots,1,e,\ldots,e)$ and consider two cases:

(1) $\min(\pi_{p+q+1}(\D(\mathbf{S})))=e$, that is $\min(\D(\mathbf{S}))=\min(\D(\mathbf{T}))$

(2) $\min(\pi_{p+q+1}(\D(\mathbf{S})))=1$

We will in both cases first attend to the dense elements. We will extend $S$ with a dense element $d$ by applying \ref{EC4} and \ref{EC5}, respectively such that firstly $\mathbf{S}_1:=\Sg{P}(S\cup\{d\})$ can be embedded over $S$ into $\mathbf{T}$ and secondly the application of \ref{AC1}--\ref{AC4} to $\mathbf{S}_1$ yields a subalgebra $\mathbf{S}_2$ such that $\D(\mathbf{S}_2)\cong_S\D(\mathbf{T})$ . Once more applying \ref{AC3} and \ref{AC4} to $\Sg{P}(S\cup\{\D(\mathbf{S}_2)\})$ will finally yield the desired subalgebra $\mathbf{S'}$.

(1): There is a $k\in\{1,\ldots,p+q\}$ such that
$\pi_k(\mathbf{S})\cong\pi_{p+q+1}(\mathbf{S})$ and $\pi_k(x)=\pi_{p+q+1}(x)$ (after renaming the atoms of $\pi_{p+q+1}(\mathbf{S})$ if necessary) for
$x\in S$: $|\pi_k(\mathbf{S})|>|\pi_{p+q+1}(\mathbf{S})|$ for all $k\in\{1,\ldots,p+q\}$ would contradict $\mathbf{S}$ being the direct product of subdirectly
irreducible factors as we assume
$\mathbf{S}\cong\mathbf{2}^p\times\prod_{i=1}^{q}\widehat{\mathbf{F}_{f(i)}}$. For $a>b$ there is no embedding of $\widehat{\mbf{F}_a}$ into $\widehat{\mbf{F}_a}\times\widehat{\mbf{F}_b}$ such that the proper dense element of $\widehat{\mbf{F}_a}$ is mapped on $(e,e)\in\widehat{\mbf{F}_a}\times\widehat{\mbf{F}_b}$, which extends to more than two factors.

There is a unique $d\in\D(\mathbf{S})$ being an anti-atom of $\mathbf{S}$ but no anti-atom of $\mathbf{T}$, thus $d=(1,\ldots,1,e,e)$ if we assume $k=p+q$. Applying axiom \ref{EC4} to $d$ and 1 yields a dense element $d_1$ such that $d<d_1<1$. Observe that for all anti-atoms $d'$ of $\mathbf{S}$ with $d'\not=d$ we have $d'\parallel d_1$ since $d'<d_1$ together with $d<d_1$ would imply $d_1=1$. There is a dense element $\wt{d_1}\in T$ such that $d<\wt{d_1}<1$. If we define $\mathbf{S}_1=\Sg{P}(S\cup\{d_1\})$ then the map $h_1\colon S_1\to T$ defined by
    \begin{equation*}\label{eq_def_h2}
    h_1(s)=
 \begin{cases}
 s, &\text{for $s\in S$},\\
 \tilde{d_1},  &\text{for $s= d_1$}
 \end{cases}
        \end{equation*}
is an embedding over $S$.

To extend $\D(\mathbf{S}_1)$ in $\mathbf{P}$ appropriately we exploit that $\mathbf{P}$ satisfies \ref{AC1}--\ref{AC4}. $\mathbf{S}_1$ can be extended in $\mathbf{P}$ to a subalgebra $\mathbf{S}_2\cong \mathbf{T}$. 

In the construction of $\mathbf{S}_2$ from $\mathbf{S}_1$ in \cite{RAS} it is not taken care of whether $\mathbf{S}_2\cong_S\mathbf{T}$. But there is a maximal dense element $d_2\in S_2$ such that $d=d_1\w d_2$. For $\mathbf{S}_3:=\Sg{P}( S\cup\{d_1,d_2\})$ we have $\D(\mathbf{S}_3)\cong\D(\mathbf{T})$ and that there is an embedding $h_3\colon S_3\to T$ extending $h_1$.
%

(2): Let $a$ be the least element of $\mathbf{S}$ such that $a\parallel d_0$. Then $a^*\w d_0=a^*\w d_1$ where $d_1:=\min(\D(\mathbf{S}))=(e,\ldots,e,1)>d_0$. Applying axiom \ref{EC5} to $d_1$ and $a$
yields a dense element $\breve{d_0}$ such that $a\parallel \breve{d_0}$ and $a^*\w \breve{d_0}=a^*\w d_1$.
Therefore, if $\mathbf{S}_1:=\Sg{P}(S\cup\{\breve{d_0}\})$ then the map $h_1\colon S_1\to T$ defined by
    \begin{equation*}\label{eq_def_h1}
         h_1(s)=
 \begin{cases}
 s, &\text{for $s\in S$},\\
 d_0,  &\text{for $s=\breve{d_0}$}
 \end{cases}
    \end{equation*}
is an embedding over $S$. 
As $\mathbf{P}$ satisfies \ref{AC1}--\ref{AC4} $\mathbf{S}_1$ can be extended in $\mathbf{P}$ to a subalgebra $\mathbf{S}_2\cong \mathbf{T}$. There is a maximal dense element $d\in S_2\setminus S_1$. 
For $\mathbf{S}_3:=\Sg{P}( S\cup\{\breve{d_0},d\})$ we have $\D(\mathbf{S}_3)\cong\D(\mathbf{T})$ and that there is an embedding $h_3\colon S_3\to T$ extending $h_1$.

Thus in both subcases there is a subalgebra $\mathbf{S}_3$ of $\mathbf{P}$ extending $\mathbf{S}$ such that $\D(\mathbf{S}_3)\cong \mathbf{2}^{q+1}$ and an embedding $h_3\colon S_3\to T$ over
$S$.
In the first subcase there are two maximal dense elements $d_1,d_2\in\D(\mathbf{S}_3)\setminus\D(\mathbf{S})$. Again proceeding as in the proof of \cite[Proposition 6.6]{RAS} applying axiom \ref{AC3} yields elements $k_1$ and $k_2$ such that $\mathbf{S}_4:=\Sg{P}(S_3\cup\{a_1,a_2\})\cong\mathbf{S}\times\pi_{p+q+1}(\mathbf{S})$:
There one defines $a_i=k_i\bsup c_0^*$ with $c_0=(\ora{0},1,\ldots,1)\in S$, the first $p$ places being 0.

The homomorphism $h_4\colon S_4\to T$ extending $h_3$ by $h_4(a_{1}):=(1,\ldots,1,\allowbreak 0,1)\in T\setminus S$ and $h_4(a_{2}):=\allowbreak(1,\ldots,\allowbreak1,0)\in T\setminus S$ is an embedding. As $h_3$ is over $S$ so is $h_4$.

In the second subcase there is by the
construction of $\mathbf{S}_1$ a unique maximal dense element $d\in\D(\mathbf{S}_3)\setminus S$.
Again proceeding as in the proof of \cite[Proposition 6.6]{RAS} we find a skeletal element
$k_d\in P$ such that $\mathbf{S}_4:=\Sg{P}(S_3\cup\{a_d\})\cong\mathbf{S}\times\pi_{p+q+1}(\mathbf{S})$, $a_d=k_d\bsup c_0^*$.
Therefore, the homomorphism $h_4\colon S_4\to T$ extending $h_4$ by $h(k_d):=(1,\ldots,1,0)\in T\setminus S$ is an embedding. As $h_3$ is over $S$ so is $h_4$.

Finally, we come to $\mathbf{S'}$. If not $\mathbf{S}_4\cong\mathbf{T}$ we apply \ref{AC4} appropriately to obtain an extension $\mathbf{S'}$ congruent to $\mathbf{T}$ and an isomorphism $h\colon S'\to T$ extending $h_4$.
\end{proof}

\begin{lemma}\label{lemma_spec_ext_case2}
Let $\mathbf{P}$ and $\mathbf{Q}$ be p-semilattices with $\mathbf{Q}$ being an extension of $\mathbf{P}$, let $\mathbf{S}\cong\mathbf{2}^{p}\times\prod_{i=1}^q\widehat{\mathbf{F}_{f(i)}}$ be a finite subalgebra of $\mathbf{P}$ with $\D(\mathbf{S})\setminus\{1\}\not=\emptyset$, and let  $\mathbf{T}\cong\mathbf{2}^{p}\times \prod_{i=1}^{q-1}\widehat{\mathbf{F}_{f(i)}}\times\widehat{\mathbf{F}_{f(q)+1}}$
be a finite subalgebra of $\mathbf{Q}$ that is an extension of $\mathbf{S}$, $0\leq p$, $1\leq f(i)$, $1\leq i\leq q$.
If $\mathbf{P}$ satisfies \ref{AC1}--\ref{AC4} and
\ref{EC1}--\ref{EC5}, then there is an extension $\mathbf{S'}$ of $\mathbf{S}$ in $\mathbf{P}$ satisfying $\mathbf{S'}\cong_S\mathbf{T}$.
\end{lemma}
\begin{proof}
There are uniquely determined $d\in\D(\mathbf{S})\setminus\{1\}$ with $d$ being an anti-atom, and $b_1\in\Sk(\mathbf{S})$ such that
  $b_1<d$ and $b_1$ is an anti-atom of $\Sk(\mathbf{S})$ but no anti-atom of $\Sk(\mathbf{T})$. Applying \ref{AC4} to $b_1$ and $d$ yields a
  skeletal element $b_2$ such that $b_1<b_2<d$ and $b_1\bsup b_2^*<d$. Putting $\mathbf{S'}=\Sg{P}(
  S\cup\{b_2\})$ we obtain as for \eqref{eq_my_contr}
\begin{equation}\label{eq_my_contr2}
    \mathbf{S'}=\set{((s\w b_2)\bsup(t\w b_2^*))\w d: s,t\in \Sk(\mathbf{S}),d\in \D(\mathbf{S})},
\end{equation}
whose right hand side is isomorphic to
$\prod_{i=1}^{q-1}\widehat{\mathbf{F}_{f(i)}}\times\widehat{\mathbf{F}_{f(q)+1}}$ and thus to $\mathbf{T}$. Therefore there
is a skeletal anti-atom $\bar{b}\in T\setminus S$ such that $b_1<\bar{b}<d$ and $b_1\bsup\left.\bar{b}\right.^*<d$.

Now there is according to (\ref{eq_my_contr2}) a isomorphism $h\colon S'\to T$ over $S$ defined by
\[
h(((s\w b_2)\bsup(t\w b_2^*))\w d)=((s\w\bar{b})\bsup(t\w\left.\bar{b}\right.^*))\w d.
\]
\end{proof}
\begin{lemma}\label{lemma_spec_ext_case3}
Let $\mathbf{P}$ and $\mathbf{Q}$ be p-semilattices, $\mathbf{Q}$ an extension of $\mathbf{P}$, let $\mathbf{S}\cong\mathbf{2}^{p}\times\prod_{i=1}^q\widehat{\mathbf{F}_{f(i)}}$ be a finite subalgebra of $\mathbf{P}$ with $0\leq p$ and  $1\leq f(i)$ for $1\leq i\leq q$, and let $\mathbf{T}\cong\mathbf{2}^{p+1}\times\prod_{i=1}^q\widehat{\mathbf{F}_{f(i)}}$ be a finite subalgebra of $\mathbf{Q}$ that is an extension of $\mathbf{S}$.
 If $\mathbf{P}$ satisfies \ref{AC1}--\ref{AC4} and \ref{EC1}--\ref{EC5}, then
  then there is an extension $\mathbf{S'}$ of $\mathbf{S}$ in $\mathbf{P}$ satisfying $\mathbf{S'}\cong_S\mathbf{T}$.
\end{lemma}
\begin{proof}
We first consider the case $p>0$. In this case there is a unique
anti-atom $b_1$ of $\Sk(\mathbf{S})$ such that $b_1\parallel d$ for all $d\in
\D(\mathbf{S})\setminus\{1\}$ and $b_1$ is not an anti-atom of $\mathbf{T}$. Applying \ref{EC1} to $b_1$ and $1$ yields a
skeletal element $b_2$ such that $b_1<b_2<1$. Since
$\mathbf{T}\cong\mathbf{2}^{p+1}\times\prod_{i=1}^q\widehat{\mathbf{F}_{f(i)}}$ there
is a skeletal anti-atom $\bar{b}\in T\setminus S$ such that $b_1<\bar{b}<1$. Setting $\mathbf{S'}=\Sg{P}( S\cup\{b_2\})$ there
is a unique isomorphism $h\colon S'\ra T$ over $S$ and
$h(b_2)=\bar{b}$:

This holds because $b_2$ and $\bar{b}$ satisfy the same equations with respect to $\D(\mathbf{S})$ as $b_1$ and because there is a unique isomorphism 
\[
	h_1\colon\Sg{P}(\Sk(\mathbf{S})\cup\{b_2\})\ra \Sg{Q}(\Sk(\mathbf{S})\cup\{\bar{b}\})
\]
over $\Sk(\mathbf{S})$, see Remark \ref{remark_ec_ba}. 

We now consider the case $p=0$, that
is $\mathbf{T}\cong\mathbf{2}\times\prod_{i=1}^{q}\widehat{\mathbf{F}_{f(i)}}$. We proceed by the following steps:

(i) We describe how $S$ is embedded in $T$ and as a result the set $T\setminus S$.\\
(ii) We determine a Boolean element $\ol{b}\in T\setminus S$ such that $\Sg{P}\left(S\cup\{\ol{b}\}\right)=T$.\\
(iii) Applying axiom \ref{EC2} yields a Boolean element $b$, which behaves with respect to $\mathbf{S}$ in the same way as $\ol{b}$.\\
(iv) We determine the subalgebra $\mathbf{S'}:=\Sg{P}(S\cup\{b\})$, which will be shown to be isomorphic to $\mathbf{T}$ over $S$. \\
(v) We define a map $h\colon S'\ra T$ that is over $S$. We will show that $h$ is an isomorphism.\\
(v.i) $h$ is a homorphism.\\
(v.ii) $h$ is injective.

(i): Again we may assume
$\mathbf{T}=\prod_{i=0}^{q}\widehat{\mathbf{F}_{f(i)}}$ with $\widehat{\mathbf{F}_{f(0)}}:=\widehat{\mathbf{F}_{0}}=\mathbf{2}$,
identifying the subalgebra $\mathbf{T}$ of $\mathbf{Q}$ with the direct product $\mathbf{T}$ is
isomorphic to. There is an atom $a_{i,j}$ of $\widehat{\mathbf{F}_{f(i)}}$ with $i\in\{1,\ldots,q\}$ and $j\in\{1,\ldots,f(i)\}$, such that
\begin{multline}\label{eq_dir_prod_sa_3}
    S=\{x\in T\colon\left(\pi_i(x)\geq a_{i,j}\longrightarrow
\pi_0(x)=1\right)\And\\ \left(\pi_i(x)\not\geq a_{i,j}\longrightarrow \pi_0(x)=0\right)\}.
\end{multline}
We may assume $i=q$ and $j=1$. For
  $\bar{b}:=(0,1,\ldots,1)\in T\setminus S$ we have $\bar{b}\parallel d$ and $\left.\bar{b}\right.^*< d$
 for all $d\in \D(\mathbf{T})\setminus\{1\}$. We obtain %
\begin{multline}\label{eq_char_T}
    T=S\cup\set{d\w \bar{b}\w s: d\in\D(\mathbf{S}),s\in\Sk(\mathbf{S}),\pi_0(s)=1}\cup\\
    \set{d\w\left(\bar{b}\w s\right)^*: d\in\D(\mathbf{S}),s\in\Sk(\mathbf{S}),\pi_0(s)=1}
\end{multline}
 as follows: From \eqref{eq_dir_prod_sa_3} it follows %
\begin{multline}\label{eq_T_without_S}
    T\setminus S=\{x\in T\colon \left(\pi_q(x)\geq a_{q,1}\longrightarrow
\pi_0(x)=0\right)\And\\ \left(\pi_q(x)\not\geq a_{q,1}\longrightarrow \pi_0(x)=1\right)\}.
\end{multline}
Let $x\in T\setminus S$ be such that $\pi_q(x)\not\geq a_{q,1}$ and $\pi_0(x)=1$. There is  $d_x\in\D(\mathbf{T})=\D(\mathbf{S})$ such that
$x=d_x\w x^{**}$. For $t:=x^{**}$ due to \eqref{eq_dir_prod_sa_3}, as $t\not\in S$ follows from $x\not\in S$, we have $\pi_0(t)=1$ and $\pi_q(t)\not\geq a_{q,1}$. For $u\in T$ such that $\pi_0(u)=0$
and $\pi_k(u)=\pi_k(t)$ for $k=1,\ldots,q$ we have $u\in\Sk(\mathbf{S})$ according to \eqref{eq_dir_prod_sa_3}. Setting  $s=u^*$
we obtain $t=\left.\bar{b}\right.^*\bsup u=\left(\bar{b}\w u^*\right)^*=\left(\bar{b}\w s\right)^*$,
thus $x=d_x\w t=d_x\w\left(\bar{b}\w s\right)^*$ such that $s\in S$ and $\pi_0(s)=1$. Similarly one shows that for
$x\in T\setminus S$ such that $\pi_i(x)\geq a_{q,1}$ and $\pi_0(x)=0$ there is $s\in\Sk(\mathbf{S})$ such that $\pi_0(s)=1$ and
$d\in\D(\mathbf{S})$ such that $x=d\w s\w\bar{b}$. Obviously, the right hand side of \eqref{eq_char_T} is a
disjoint union.

(ii): Now we are going to show that there is a skeletal element $b\in P$ that behaves with respect
to $\mathbf{S}$ in the same way as $\bar{b}$. In order to express what this means, we define $a_{m}\in S$ to be the
maximal central element below the maximal dense element $d_m$ for $1\leq m\leq q$. Therefore, $\pi_k(d_m)=e$ if and only if $m=k$,
and 
\[
\pi_k(a_{m}) =
 \begin{cases}
1, & \text{for $k\not=m$;} \\
 0, & \text{for $k=m$;}
 \end{cases}\quad(m\not= q)
 \qquad
\pi_k(a_{q})=
            \begin{cases}
1, & \text{for $k\not\in\{0,q\}$;}  \\
 0, & \text{for $k\in\{0,q\}$.}
 \end{cases}
 \]
Furthermore, we have
%
\begin{gather}\label{eq_char_new_sk0}
   a_{q}=\Bsup\set{a_{m}^*: 1\leq m\leq q-1},\\
\label{eq_char_new_sk1}
   \bar{b}\parallel d_m\And \left.\bar{b}\right.^*<a_{m}\text{ for } m\in\{1,\ldots,q-1\},\\
\label{eq_char_new_sk2}
   a_{q}<\bar{b}\And  \bar{b}\w a_{q}^*\parallel d_q \And\left.\bar{b}\right.^*\bsup\, a_{q}<d_q. \end{gather}
(iii): Define $s_0=\Bsup\set{s\in
\Sk(\mathbf{S}): \pi_0(s)=0}$ and let $b$ be the result of applying \ref{EC2} to $a_{q}$, $s_0$
and $d_{q}$.
Then \eqref{eq_char_new_sk1} and \eqref{eq_char_new_sk2} are
satisfied if $\bar{b}$ is substituted by $b$: \eqref{eq_char_new_sk1} follows from $d_m\parallel a_{m}^*<a_{q}\leq s_0<b$, the first inequality being implied by \eqref{eq_char_new_sk0}. $b$ satisfies
\eqref{eq_char_new_sk2}, as $b$ is obtained by  applying \ref{EC2}
to $a_{q}$, $s_0$ and $d_{q}$. We additionally  have
\begin{equation}\label{eq_new_elem_c0}
    (\forall s\in S)(\pi_0(s)=0\longrightarrow s< b),
\end{equation}
dropping the assumption $s\in\Sk(\mathbf{S})$: There is $d_s\in\D(\mathbf{S})$ with $s=d_s\w s^{**}$. $\pi_0(s)=0$ implies $\pi_0(s^{**})=0$. By $s_0<b$ we obtain 
\[
s\w b=(d_s\w s^{**})\w b=d_s\w (s^{**}\w b)=d_s\w s^{**}=s.
\]

(iv): Now we show that for $\mathbf{S'}:=\Sg{P}( S\cup\{b\})$ there is an
isomorphism $h\colon T\ra S'$ over $S$ with $h\left(\bar{b}\right):=b$. We first describe $S'$, the carrier set of $\mathbf{S'}$:
\begin{multline}\label{eq_char_S'}
    S'=S\cup\set{d\w b\w s: d\in\D(\mathbf{S}),s\in\Sk(\mathbf{S}),\pi_0(s)=1}\cup\\
    \set{d\w (b\w s)^*: d\in\D(\mathbf{S}),s\in\Sk(\mathbf{S}),\pi_0(s)=1}.
\end{multline}
That rhs\eqref{eq_char_S'} is contained in $S'$ and that rhs\eqref{eq_char_S'} contains $S\cup\{b\}$ is obvious. For the converse we have to show that rhs\eqref{eq_char_S'} is closed under the operations.
We consider the cases that are not obvious. In the sequel we assume $d\in\D(\mathbf{S})$ and $s\in \Sk(\mathbf{S})$ with $\pi_0(s)=1$.
\begin{alignat*}{2}
 \left(d\wedge (b\w s)^*\right)^*& =\left((b\w s)^{*}\right)^* && \qquad \text {by \eqref{equation_dense_star}}\\
& =  b\w s && \qquad \text{by \eqref{equation_two_stars}}\\
 & = 1\w b\w s &&
\end{alignat*}

%
and similarly $\left(d\wedge (b\w s)\right)^* = 1\w(b\w s)^{*}$.
\begin{align*}
 (d_1\w (b\w s_1)^*)\w (d_2\w(b\w s_2)^*)& =d_1\w d_2\w \left(\left(b\w s_1\right)\bsup\left(b\w s_2\right)\right)^* \\
 & = d_1\w d_2\w\left(b\w(s_1\bsup s_2)\right)^*,
\end{align*}
$d_1\w d_2\w\left(b\w(s_1\bsup s_2)\right)^*\in S'$ as we have $\pi_0\left(s_1\bsup s_2\right)=1$.
\begin{alignat*}{2}
 (d_1\w (b\w s_1)^*)\w (d_2\w(b\w s_2))& =d_1\w d_2\w \left(b^*\bsup s_1^*\right)\w b\w s_2 &&\\
& = d_1\w d_2\w (s_1^*\w b)\w s_2 &&\\
 & = d_1\w d_2\w s_1^*\w s_2 && \text{by \eqref{eq_new_elem_c0}}\\
 & \in S &&
\end{alignat*}
Finally, we look at $x\in S$ and show that $x\w d\w(b\w s)$ and $x\w d\w(b\w s)^*$ are also contained in rhs\eqref{eq_char_S'}. First we consider $x\w d\w(b\w s)$. If $\pi_0(x)=1$ then $x\w d\w(b\w s)$ is contained in rhs\eqref{eq_char_S'} since $\pi_0(x\w s)=1$. If $\pi_0(x)=0$ then $x\w d\w(b\w s)=x\w d\w s\in S$ by \eqref{eq_new_elem_c0}. Next we consider $x\w d\w(b\w s)^*$. There is $d_x\in\D(\mathbf{S})$ with $x=d_x\w x^{**}$. First we assume $\pi_0(x)=0$, which implies $x^*\bsup b=1$.
\begin{alignat*}{2}
 x\w (d\w(b\w s)^*) &=  d\w d_x\w x^{**}\w(b\w s)^* && \\
  &=  d\w d_x\w
  \left(x^*\bsup(b\w s)\right)^*&&  \\
  & = d\w d_x\w \left((x^*\bsup b)\w (x^*\bsup s)\right)^*&&\\
&= d\w d_x\w \left(x^*\bsup s\right)^*&&\text{by $x^*\bsup b=1$}  \\
&\in S &&
\end{alignat*}
 Now let $\pi_0(x)=1$.
\begin{alignat*}{2}
 x\w (d\w(b\w s)^*) &= d\w d_x\w x^{**}\w(b\w s)^* && \\
 &= d\w d_x\w\left(x^*\bsup(b\w s)\right)^*  &&\\
   &= d\w d_x\w\left((x^*\bsup b)\w(x^*\bsup s)\right)^* && \\
  &=  d\w d_x\w \left(b\w(x^*\bsup s)\right)^* && \text{by \eqref{eq_new_elem_c0} and $\pi_0\left(x^{*}\right)=0$}
\end{alignat*}

Note that $a_{q}$ is the only maximal central element of $\mathbf{S}$ that is not a maximal skeletal element of $\mathbf{S'}$ anymore.
In $\mathbf{S'}$ we have $a_{q}<b^*\bsup a_{q}=(b\w a_{q}^*)^*<d_q$.

(v): As rhs\eqref{eq_char_T} is a disjoint union
\[
h(x):=
      \begin{cases}
      x, & x\in S;\\
      &\\
        d\w b\w s, &  x=d\w \bar{b}\w s, s\in\Sk(\mathbf{S}), \pi_0(s)=1,\\
        & d\in\D(\mathbf{S}); \\
        d\w\left(b\w s\right)^* & x=d\w \left(\bar{b}\w s\right)^*,s\in\Sk(\mathbf{S}),\pi_0(s)=1,\\
        & d\in\D(\mathbf{S})
      \end{cases}
    \]
is well-defined. Obviously, $h$ is over $S$. \eqref{eq_char_S'} implies that $h$ is onto $S'$.

It remains to show that for all $u,v\in T$
\begin{align}\label{hom_inf}
    h(u\w v)&=h(u)\w h(v)\\
\label{hom_compl}
h(u^*)&=h(u)^*
\end{align}
hold and that $h$ is injective.

(v.i): For \eqref{hom_inf} we consider, assuming $\pi_0(s_u)=\pi_0(s_v)=1$, four cases.
%

(1): $u=d_u\w (\bar{b}\w s_u)^*$, $v=d_v\w(\bar{b}\w s_2)^*$.
    \begin{align*}
      h(u\w v) &=  h\left((d_u\w (\bar{b}\w s_u)^*)\w (d_v\w(\bar{b}\w s_v)^*)\right)  \\
       & =   h\left(d_u\w d_v\w((\bar{b}\w s_u)\bsup(\bar{b}\w s_v) )^*\right)\\
       &=   h\left(d\w \left(\bar{b}\w (s_u\bsup s_v )\right)^*\right)\\
       &= d_u\w d_v\w \left(b\w (s_u\bsup s_v )\right)^* \\
       & =   d_u\w d_v\w \left( (b\w s_u)\bsup (b\w s_v)\right)^* \\
       & =   (d_u\w (b\w s_u)^*)\w (d_v\w (b\w s_v )^*) \\
       & =  h(u)\w h(v)
    \end{align*}

(2): $u=d_u\w \bar{b}\w s_u$, $v=d_v\w(\bar{b}\w s_v)^*$.
    \begin{alignat*}{2}
      h(u\w v) &=  h\left((d_u\w \bar{b}\w s_u)\w (d_v\w(\bar{b}\w s_v)^*)\right) && \\
       & =   h\left(d_u\w d_v\w \bar{b}\w s_u\w\left(\left.\bar{b}\right.^*\bsup s_v^* \right)\right) &&\\
       &=   h\left(d\w s_u\w\left(\left(\bar{b}\w\left.\bar{b}\right.^*\right)\bsup\left(\bar{b}\w s_v^*\right) \right)\right) && \text{by $d:=d_u\w d_v$}\\
       &= h\left(d\w s_u\w \bar{b}\w s_v^* \right) && \\
       &= h\left(d\w s_u\w s_v^* \right) && \text{by $\bar{b}>s_v^*$}\\
       & = d\w s_u\w s_v^* &&\\
       & =  d\w s_u\w \left(b\w \left(b^*\bsup s_v^*\right)\right) && \text{by \eqref{eq_new_elem_c0}} \\
       & =   (d_u\w b\w s_u)\w (d_v\w\left(b\w s_v\right)^*) &&\\
       & =  h(u)\w h(v) &&
    \end{alignat*}

  (3):
   $u\in S$, $v=d\w \bar{b}\w s$ with $\pi_0(s)=1$. We consider two subcases:

  (3.1): $\pi_0(u)=1$. Then $\pi_0(u\w s)=1$, thus
    \begin{align*}
      h(u\w v) &= h(u\w (d\w\bar{b}\w s)) \\
       &= h(d\w \bar{b}\w (u\w s)) \\
       &= d\w b\w (u\w s)\\
       &= u\w (d\w b\w s)\\
       &= h(u)\w h(v)
    \end{align*}

    (3.2): $\pi_0(u)=0$.
    \begin{alignat*}{2}
      h(u\w v) &= h(u\w (d\w\bar{b}\w s)) &&\\
       &= h(d\w u\w s) && \text{by $\bar{b}>u$} \\
       &= d\w u\w s && \\
       &= u\w (d\w b\w s) && \text{by \eqref{eq_new_elem_c0}}\\
       &= h(u)\w h(v) &&
    \end{alignat*}

   (4): $u\in S$, $v=d\w (\bar{b}\w s)^*$ with $\pi_0(s)=1$. There is $d_u\in\D(\mathbf{S})$ such that $u=d_u\w u^{**}$.
   We consider again two subcases:

  (4.1): $\pi_0(u)=1$. Then $\pi_0(u\w s)=1$, thus
    \begin{alignat*}{2}
      h(u\w v) &= h\left((d_u\w u^{**})\w \left(d\w\left(\bar{b}\w s\right)^*\right)\right) &&\\
       &= h\left(d_u\w d\w \left(u^*\bsup\left(\bar{b}\w s\right)\right)^*\right) &&\\
       &= h\left(d_u\w d\w \left(\left(u^*\bsup\bar{b}\right)\w
       \left(u^*\bsup s\right)\right)^*\right) &&\\
       &= h\left(d_u\w d\w \left(\bar{b}\w
       \left(u^*\bsup s\right)\right)^*\right)&& \text{by $\bar{b}>u^*$}\\
       & = d_u\w d\w \left(b\w \left(u^*\bsup s\right)\right)^* &&\\
       & = d_u\w d\w \left(\left(u^*\bsup b\right)\w \left(u^*\bsup s\right)\right)^* && \text{by \eqref{eq_new_elem_c0}} \\
       &= d_u\w d\w \left(u^*\bsup (b\w s)\right)^* &&\\
       & =  (d_u\w u^{**})\w (d\w\left(b\w s\right)^*) && \\
       &= h(u)\w h(v). &&
    \end{alignat*}

    (4.2): $\pi_0(u)=0$:
    \begin{alignat*}{2}
      h(u\w v) &= h\left((d_u\w u^{**})\w \left(d\w\left(\bar{b}\w s\right)^*\right)\right) &&\\
      & =  h\left(d_u\w d\w u^{**}\w\left(\left.\bar{b}\right.^*\bsup s^*  \right)\right) &&\\
       & = h\left(d_u\w d\w \left( \left(u^{**}\w \left.\bar{b}\right.^*\right)\bsup(u^{**}\w s^*)  \right)\right) &&\\
       &= h(d\w d_u\w u^{**}\w s^*) && \text{by $u^{**}\w \left.\bar{b}\right.^*=0$}\\
       &= d\w d_u\w u^{**}\w s^*&&\\
       &= d\w d_u\w \left((u^{**}\w b^*)\bsup(u^{**}\w s^*)\right) && \text{by \eqref{eq_new_elem_c0}}\\
       &= d_u\w d\w \left(u^{**}\w(b^*\bsup s^*)\right)&&\\
       &= u\w (d\w(b\w s)^*)&&\\
       &= h(u)\w h(v)
    \end{alignat*}
For \eqref{hom_compl} we consider, assuming
$\pi_0(s)=1$, the following cases: \\
(1): $u=d\w \bar{b}\w s$:
    \begin{alignat*}{2}
      h(u^*) &= h\left(\left(d\w \bar{b}\w s\right)^*\right) &&\\
        &= h\left(1\w\left(\bar{b}\w s\right)^*\right) && \text{by \eqref{equation_dense_star}} \\
        &= 1\w(b\w s)^* &&\\
        &= (d\w b\w s)^* &&\\
        &= h(u)^* &&
    \end{alignat*}
 (2): $u=d\w (\bar{b}\w s)^*$:
    \begin{alignat*}{2}
      h(u^*) &= h\left(\left(d\w \left(\bar{b}\w s\right)^*\right)^*\right) &&\\
       &= h\left(1\w \left(\bar{b}\w s\right)^{**}\right)&&\text{by \eqref{equation_dense_star}}\\
       &=  h\left(1\w \bar{b}\w s\right)&&\\
       &= 1\w b\w s &&\\
       &= (d\w(b\w s)^*)^* &&\\
       &= h\left(d\w\left(\bar{b}\w s\right)^*\right)^* && \\
       &= h(u)^* &&
    \end{alignat*}

(v.ii): To show the injectivity of $h$ assume $x,y\in T$ with $x\not=y$. If $x,y\in S$ then $h(x)\not=h(y)$ trivially holds. We consider the following non-trivial cases:

(1): $x\in S$, $y\in T\setminus S$. We consider the following subcases:

(1.1): $y=d_y\w \bar{b}\w s_y$,
       $\pi_0(x)=0$. Then $h(x)=h(y)$ is impossible:
      %
      \begin{alignat*}{2}
        h(x)= h(y) &\implies x=d_y\w b\w s_y &&\\
        &\implies x^{**}=b\w s_y&&\\
        &\implies a_{q}^*\bsup x^{**} = a_{q}^*\bsup\left(b\w s_y\right) &&\\
        &\implies a_{q}^*\bsup x^{**} = \left(a_{q}^*\bsup b\right)\w\left(a_{q}^*\bsup s_y\right)&& \\
        &\implies a_{q}^*\bsup x^{**}=a_{q}^*\bsup s_y && \text{by $a_{q}^*\bsup b=1$}\\
        &\implies \pi_q\left(a_{q}^*\bsup x^{**}\right)=\pi_q\left(a_{q}^*\bsup s_y\right) &&\\
        &\implies \pi_q\left(x^{**}\right)=\pi_q\left(s_y\right) &&
      \end{alignat*}
      But as $\pi_0(x)=0$ and $\pi_0(s_y)=1$ we have $\pi_0(x)\not\geq a_{q,j}$, $\pi_q(s_y)\geq a_{q,j}$, contradicting the preceding equality.

(1.2): $y=d_y\w \bar{b}\w s_y$, $\pi_0(x)=1$. Then $h(x)=h(y)$ again implies $x^{**}=b\w s_y$ from which we obtain $x^{**}\leq b$. Furthermore, $x^*<b$ from \eqref{eq_new_elem_c0} since $\pi_0(x^*)=0$. The last two inequalities imply $b=1$ contradicting the choice of $b$.

(1.3): $y=d_y\w \left(\bar{b}\w s_y\right)^*$, $\pi_0(x)=0$.  $h(x)=h(y)$ is impossible:
      Similarly to the preceding subcase we obtain $x^{*}\leq b$. But \eqref{eq_new_elem_c0} and $\pi_0(x)=0$ imply $x\leq b$. Together we obtain $b=1$ again contradicting the choice  of $b$.

(1.4): $y=d_y\w \left(\bar{b}\w s_y\right)^*$, $\pi_0(x)=1$. $h(x)=h(y)$ is impossible:
      \begin{alignat*}{2}
        h(x)= h(y) &\implies x=d_y\w (b\w s_y)^* &&\\
        &\implies x^{**}=(b\w s_y)^*&&\\
        &\implies x^{**} =b^*\bsup s_y^* &&\\
        &\implies b\w x^{**}=b\w s_y^*&&\\
        &\implies b^*\bsup x^{*} =b^*\bsup s_y &&\\
        &\implies a_{1}\bsup x^{*} =a_{1}\bsup s_y && \text{by \eqref{eq_char_new_sk1} and $m=1$}\\
        &\implies  \pi_q(a_{1}\bsup x^{*}) =\pi_q(a_{1}\bsup s_y) &&\\
        &\implies \pi_q(x^{*})=\pi_q(s_y) &&
        \end{alignat*}
      But the last equation contradicts $\pi_q(x^*)\not\geq a_{q,j}$, $\pi_q(y)\geq a_{q,j}$.

(2): $x,y\in T\setminus S$. We consider the following subcases:

(2.1): $x=d_x\w \bar{b}\w s_x$, $y=d_y\w \bar{b}\w s_y$.  Then $h(x)=h(y)$ implies $b\w s_x= b\w s_y$. As $\pi_0(s_x^*)=\pi_0(s_y^*)=0$ \eqref{eq_new_elem_c0} implies $s_x^*,s_y^*<b$, thus $b^*\leq s_x,s_y$, from which we obtain $b^*\w s_x=b^*\w s_y$. It follows $s_x=s_y$.

      $d_x\w \bar{b}\w s_x\not=d_y\w \bar{b}\w s_y$ is not possible: Because of $\pi_0(\bar{b})=0$ there is, setting $s=s_x=s_y$,
      $m\in\{1,\ldots,q\}$ such that $\pi_m(d_x)=e$ and $\pi_m(d_y)=1$, which is equivalent to $a_{m}^*\w d_x<a_{m}^*\w d_y=a_{m}^*\w s$. 
      In the case $m<q$ we have $a_{m}^*<b$ due to \eqref{eq_char_new_sk1}, thus $d_x\w b\w s_x\not=d_y\w b\w s_y$. In the case $m=q$ we have $b\w a_{q}^*\parallel d_q$, which is \eqref{eq_char_new_sk2}. Furthermore, $s\geq a_{q}^*$ as $\pi_0(s)=\pi_q(s)=1$. We obtain $h(y)=d_y\w b\w s\parallel d_q$. On the other hand because of $d_x\leq d_q$
      we have $h(x)\leq d_x\leq d_q$, again contradicting our assumption $h(x)=h(y)$.

(2.2): $x=d_x\w \left(\bar{b}\w s_x\right)^*$, $y=d_y\w \left(\bar{b}\w s_y\right)^*$. As in the preceding subcase $h(x)=h(y)$ implies $b\w s_x=b\w s_y$, again leading to a contradiction.

(2.3): $x=d_x\w \bar{b}\w s_x$, $y=d_y\w \left(\bar{b}\w s_y\right)^*$. Here $h(x)= h(y)$ implies $b\w s_x= b^*\bsup s_y^*$, which is impossible.
\end{proof}

Now we can prove that the satisfiability of the axioms \ref{AC1}--\ref{AC4} and \ref{EC1}--\ref{EC5} is sufficient for a p-semilattice to be existentially closed.
\begin{theorem}\label{theorem_sufficiency}
If a p-semilattice $\mathbf{P}$ satisfies the axioms \ref{AC1}--\ref{AC4} and \ref{EC1}--\ref{EC5}, then it is existentially closed.
\end{theorem}

\begin{proof} 
We assume that $\mathbf{S}$, $\mathbf{T}$ and $\mathbf{Q}$ satisfy \ref{lemma_criterion_ec_item1} and \ref{lemma_criterion_ec_item2} of Lemma \ref{lemma_criterion_ec} and show that there is $\mathbf{S'}\leq\mathbf{P}$ such that $\mathbf{S'}\cong_S\mathbf{T}$. With Lemma \ref{lemma_criterion_ec} follows that Lemma $\mathbf{P}$ is existentially closed.

 As $\mathbf{S}\leq\mathbf{T}$ the number $r$ of Boolean anti-atoms of $\mathbf{S}$ that are also anti-atoms of $\mathbf{T}$ is less or equal than $r'$, the number of Boolean anti-atoms of $\mathbf{T}$.  $k-r=\sum_{i=1}^{s_1} f_1(i)$ is the number of Boolean anti-atoms of $\mathbf{S}$ that are below a proper dense element of $\mathbf{T}$; the factor $\prod_{i=1}^{s_1}\mathbf{F}_{f_1(i)}$ in \eqref{eq_apprex} generates the Boolean anti-atoms of $\mathbf{S}$ that are not anti-atoms of $\mathbf{T}$.

If $s'=0$ then $s_1=s_2=0$; applying \ref{EC1} $r'-r$ times yields a subalgebra $\overline{\mathbf{S}}$ of $\mathbf{P}$ satisfying $\overline{\mathbf{S}}\cong_S\mathbf{T}$.
Therefore we assume $0\leq s_1+s_2<s'$ and $0\leq r\leq r'$.

According to Lemma \ref{lemma_stepwise_ext0} there is a sequence
$\mathbf{T}_0,\ldots,\mathbf{T}_{s_1}$ of subalgebras of $\mathbf{T}$ with $\mathbf{T}_0=\mathbf{S}$ such that for
$k=0,\ldots,s_1-1$ we have $\mathbf{T}_k\leq \mathbf{T}_{k+1}$ and
\begin{equation}\label{eq_suff_ext1}
    \mathbf{T}_{k}\cong\mathbf{2}^r\times\prod_{i=1}^{k}\widehat{\mathbf{F}_{f_1(i)}}\times\prod_{i=k+1}^{s_1}\mathbf{F}_{f_1(i)}\times\prod_{i=1}^{s_2}\widehat{\mathbf{F}_{f_2(i)}},
\end{equation}
thus,
\[
  \mathbf{T}_{s_1}\cong\mathbf{2}^r\times\prod_{i=1}^{s_1}\widehat{\mathbf{F}_{f_1(i)}}\times\prod_{i=1}^{s_2}\widehat{\mathbf{F}_{f_2(i)}},
\]
which we can write as
\begin{equation}\label{eq_suff_ext2}
  \mathbf{T}_{s_1}\cong\mathbf{2}^r\times\prod_{i=1}^{s_1+s_2}\widehat{\mathbf{F}_{f(i)}}
\end{equation}
with  $f(i)=f_1(i)$ if $1\leq i\leq s_1$ and $f(i)=f_2(i)$ if $s_1+1\leq i\leq s_1+s_2$.
%

For all $i\in\{1,\ldots,s_1+s_2\}$ there is a sequence $\mathbf{T}_{i,0},\ldots,\mathbf{T}_{i,g(i)-f(i)}$ such that
\begin{align}\label{eq_suff_ext2a0}
   \mathbf{T}_{i,j}&\leq \mathbf{T}_{i,j+1}\quad(0\leq j<g(i)-f(i)),\\
   \label{eq_suff_ext2a1}
   \mathbf{T}_{i,j_1}&\leq \mathbf{T}_{i+1,j_2}\quad(0\leq j_1<g(i)-f(i),0\leq j_2<g(i+1)-f(i+1)),\\
\label{eq_suff_ext2a}
    \mathbf{T}_{i,j}&\cong\mathbf{2}^r\times\prod_{k=1}^{i-1}\widehat{\mathbf{F}_{g(k)}}\times\wh{\mathbf{F}_{f(i)+j}}\times\prod_{k=i+1}^{s_1+s_2}\widehat{\mathbf{F}_{f(k)}}\quad(0\leq j\leq g(i)-f(i)),
\end{align}
%
thus
\begin{equation}\label{eq_zw_schritt3}
    \mathbf{T}_{s_1+s_2,g(s_1+s_2)-f(s_1+s_2)}\cong\mathbf{2}^r\times\prod_{i=1}^{s_1+s_2}\widehat{\mathbf{F}_{g(i)}}.
\end{equation}
According to Lemma \ref{lemma_stepwise_ext1} there is, setting $q=s'-s_1-s_2$, a sequence
$\mathbf{U}_0,\ldots,\mathbf{U}_{2q}$ of subalgebras of $\mathbf{T}$ with $\mathbf{U}_0=\mathbf{T}_{s_1+s_2,g(s_1+s_2)-f(s_1+s_2)}$ and
$\mathbf{U}_{2q}\cong\mathbf{2}^r\times\prod_{i=1}^{s'}\widehat{\mathbf{F}_{g(i)}}$ such that for
$k=0,\ldots,2q-1$ we have $\mathbf{U}_k\leq \mathbf{U}_{k+1}$, whereby
$\mathbf{U}_{k+1}\cong \mathbf{U}_k\times\widehat{\mathbf{F}_{l_{k+1}}}$ ($k=0,\ldots,q-1\text{ and }1\leq l_{k+1}\leq g(k+1)$),
\begin{equation}\label{eq_suff_ext2}
    \mathbf{U}_{q+k}\cong\mathbf{2}^r\times\prod_{i=1}^{s_1+s_2+k}\widehat{\mathbf{F}_{g(i)}}\times\prod_{i=s_1+s_2+k+1}^{s'}\widehat{\mathbf{F}_{l_i}}\quad (k=0,\ldots,q).
\end{equation}
%
%
In \eqref{eq_suff_ext2} there is for every $k\in\{0,\ldots,q-1\}$ a
sequence $\mathbf{U}_{q+k,0},\allowbreak\ldots,\mathbf{U}_{q+k,g(k)-l_k}$ such that
\begin{align}\label{eq_suff_ext2b0}
   \mathbf{U}_{q+k,j}&\leq \mathbf{U}_{q+k,j+1}\quad(0\leq j<g(k)-l_k),\\
\label{eq_suff_ext2b}
    \mathbf{U}_{q+k,j}&\cong\mathbf{2}^r\times\prod_{i=1}^{s_1+s_2+k-1}\widehat{\mathbf{F}_{g(i)}}\times\wh{\mathbf{F}_{l_k+j}}\\\nonumber
    &\times\prod_{i=s_1+s_2+k+1}^{s'}\widehat{\mathbf{F}_{l_i}}\quad(0\leq j\leq g(k)-l_k).
\end{align}
%
%
%
Finally, there is according to Lemma \ref{lemma_stepwise_ext2} a
sequence $\mathbf{V}_0,\ldots,\mathbf{V}_{r'-r}$ of subalgebras of $\mathbf{T}$ such that $\mathbf{V}_{j}\leq \mathbf{V}_{j+1}\text{ for }0\leq j<r'-r$ and
\begin{equation}\label{eq_suff_ext3}
    \mathbf{V}_{j}\cong\mathbf{2}^{r+j}\times \prod_{i=1}^{s'}\widehat{\mathbf{F}_{g(i)}}\qquad (j=0,\ldots,r'-r).
\end{equation}
We set $\mathbf{S}_0=\mathbf{S}$ and $h_0=\text{id}_S$. According to Lemma \ref{lemma_spec_ext_case0} there exists for
every $k\in\{0,\ldots,s_1-1\}$ a subalgebra $\mathbf{S}_{k+1}$ of $\mathbf{P}$ and an
isomorphism $h_{k+1}\colon S_{k+1}\to T_{k+1}$ extending $h_k$,
$\left(\mathbf{T}_k\right)_{0\leq k\leq s_1}$ the above sequence of subalgebras of $\mathbf{T}$ satisfying \eqref{eq_suff_ext1}.

Now we set $\mathbf{S}_{0,0}=\mathbf{S}_{s_1}$. According to Lemma \ref{lemma_spec_ext_case2} there exists for
every $i\in\{1,\ldots,s_1+s_2\}$ and every $j\in\{0,\ldots,g(i)-f(i)\}$
a subalgebra $\mathbf{S}_{i,j+1}$ and an isomorphism $h_{i,j+1}\colon S_{i,j+1}\to
T_{i,j+1}$ extending $h_{i,j}$, the above sequences of subalgebras
$\left(\mathbf{T}_{i,j}\right)_{0\leq j\leq g(i)-f(i)}$ of $\mathbf{T}$
satisfying \eqref{eq_suff_ext2a0}-\eqref{eq_suff_ext2a}.

Now we set $\mathbf{S}_{q}=\mathbf{S}_{s_1+s_2,g(s_1+s_2)-f(s_1+s_2)}$.
 According to Lemma \ref{lemma_spec_ext_case1} there exists for
every $k\in\{0,\ldots,q-1\}$ a subalgebra $\mathbf{S}_{q+k+1}$ of $\mathbf{P}$ and an
isomorphism $h_{q+k+1}\colon S_{q+k+1}\to U_{k+1}$ extending $h_{q+k}$,
$\left(\mathbf{U}_k\right)_{1\leq k\leq q}$ the above sequence of subalgebras of $\mathbf{T}$ satisfying \eqref{eq_suff_ext2b0} and \eqref{eq_suff_ext2b} respectively.
According to Lemma \ref{lemma_spec_ext_case2} there exists for
every $k\in\{1,\ldots,q\}$ and every $j\in\{0,\ldots,g(k)-l_k-1\}$
a subalgebra $\mathbf{S}_{q+k,j+1}$ and an isomorphism $h_{q+k,j+1}\colon S_{q+k,j+1}\ra
U_{k,j+1}$ extending $h_{q+k,j}$,
$\left(\mathbf{U}_{k,j}\right)_{0\leq j\leq g(k)-l_k-1}$ the above sequence of subalgebras of $\mathbf{T}$ satisfying \eqref{eq_suff_ext2b}.

According to Lemma \ref{lemma_spec_ext_case3} there exists for
every $j\in\{0,\ldots,r'-r\}$ a subalgebra $\mathbf{S}_{2q+j+1}$  of $\mathbf{P}$ and
an isomorphism $h_{2q+j+1}\colon\allowbreak S_{2q+j+1}\to V_{j+1}$ extending $h_{2q+j}$, $\left(\mathbf{V}_j\right)_{0\leq j\leq r'-r}$ the above sequence of subalgebras of $\mathbf{T}$ satisfying \eqref{eq_suff_ext3}.

The above implies that $h_{2q+r'-r}\colon S_{2q+r'-r}\to T$ is the desired isomorphism over $S$ since $\mathbf{V}_{r'-r}
=\mathbf{T}$ and every extension of $h_1$ is over $S$.
\end{proof}

With the preceding results we finally obtain the desired result.

\begin{corollary}\label{corollary_main}
A p-semilattice $\mathbf{P}$ is existentially closed if and only if $\mathbf{P}$ satisfies \ref{AC1}--\ref{AC4} and \ref{EC1}--\ref{EC5}.
\end{corollary}

\begin{proof}
Combine Theorem \ref{theorem_necessity} and Theroem \ref{theorem_sufficiency}.
\end{proof}





\begin{thebibliography}{99}


\bibitem{Ad} Adler, J.: The model companion of the class of pseudocomplemented semilattices is finitely axiomatizable.  Algebra Universalis (2014), (DOI) 10.1007/s00012-014-0297-9

\bibitem{RAS} Adler, J., Rupp, R., Schmid, J.: The class of algebraically closed p-semilattices is finitely axiomatizable. Algebra Universalis \textbf{70}, 287--308 (2013)

\bibitem{AbBu} Albert, M., Burris, S.: Finite Axiomatizations for Existentially closed Posets and Semilattices. Order \textbf{3}, 169--178 (1986)

\bibitem{BaHo} Balbes, R., Horn, A.: Stone lattices. Duke Math.~J.~\textbf{38},  537--545 (1970)

\bibitem{Be} Bezhanishvili, G.: Locally finite variety. Algebra Universalis \textbf{46}, 531--548 (2001)

\bibitem{Fr} Frink, O.: Pseudo-complements in semilattices. Duke Math.~J.~\textbf{37}, 505--514 (1962)

\bibitem{GeSc} Gerber, C., Schmid, J.: The model companion of Stone semilattices. Z.~Math.~Logik Grundlag.~Math. {\bf 37}, 501--512 (1991)

\bibitem{Jo} Jones, G.: Pseudocomplemented semilattices. PhD thesis, UCLA (1972)

\bibitem{Mac} Macintyre, A.: Model completeness. In Barwise, J. (ed.) Handbook of Mathematical Logic, pp. 139--180 North-Holland, Amsterdam (1977)

\bibitem{Sc3} Schmid, J.: Algebraically closed p-semilattices. Arch.~Math. \textbf{45},  501--510 (1985)

\end{thebibliography}
\end{document}